\newcommand{\C}{\mathbb{C}}
\newcommand{\N}{\mathbb{N}}
\newcommand{\R}{\mathbb{R}}
\newcommand{\cE}{\mathcal{E}}
\newcommand{\cH}{\mathcal{H}}
\newcommand{\cI}{\mathcal{I}}
\newcommand{\cK}{\mathcal{K}}
\newcommand{\cM}{\mathcal{M}}
\renewcommand{\a}{\alpha}
\renewcommand{\b}{\beta}
\renewcommand{\d}{\delta}
\newcommand{\bphi}{\boldsymbol \phi}
\newcommand{\bvarphi}{\boldsymbol \varphi}
\newcommand{\bpsi}{\boldsymbol \psi}
\newcommand{\bomega}{\boldsymbol \omega}
\newcommand{\bp}{\bar{\partial}}
\newcommand{\dd}{\sqrt{-1}\partial \bar{\partial}}
\newcommand{\cf}{{\rm cf.\ }} 
\newcommand{\ie}{{\rm i.e.\ }} 
\DeclareMathOperator{\AM}{AM}
\DeclareMathOperator{\Aut}{Aut}
\DeclareMathOperator{\Id}{Id}
\DeclareMathOperator{\Iso}{Iso}
\DeclareMathOperator{\Lie}{Lie}
\DeclareMathOperator{\PSH}{PSH}
\DeclareMathOperator{\osc}{osc}
\DeclareMathOperator{\Ric}{Ric}
\DeclareMathOperator{\Tr}{Tr}
\renewcommand{\leq}{\leqslant}
\renewcommand{\geq}{\geqslant}
\renewcommand{\hat}{\widehat}
\renewcommand{\tilde}{\widetilde}
\numberwithin{equation}{section}       
\newtheorem{prop} {Proposition} [section]
\newtheorem{thm}[prop] {Theorem} 
\newtheorem{dfn}[prop] {Definition}
\newtheorem{lem}[prop] {Lemma}
\newtheorem{rem}[prop]{Remark}
\theoremstyle{remark}
\newtheorem*{ackn}{\bf{Acknowledgment}}
\title[Ricci iteration for coupled K\"ahler-Einstein metrics]{Ricci iteration for coupled K\"ahler-Einstein metrics} 
\date{\today}
\author[R. Takahashi]{Ryosuke Takahashi}
\address{Research Institute for Mathematical Sciences\\
 Kyoto University\\
 Kyoto 606-8502\\
 JAPAN}
\email{tryosuke@kurims.kyoto-u.ac.jp}
\subjclass[2010]{53C55}
\keywords{Ricci iteration, coupled K\"ahler-Einstein metric, dynamical system}
\begin{document}

\maketitle
\begin{abstract}
In this paper, we introduce the ``coupled Ricci iteration'', a dynamical system related to the Ricci operator and twisted K\"ahler-Einstein metrics as an approach to the study of coupled K\"ahler-Einstein (CKE) metrics. For negative first Chern class, we prove the smooth convergence of the iteration. For positive first Chern class, we also provide a notion of coercivity of the Ding functional, and show its equivalence to existence of CKE metrics. As an application, we prove the smooth convergence of the iteration on CKE Fano manifolds assuming that the automorphism group is discrete.
\end{abstract}
\section{Introduction}
Let $X$ be an $n$-dimensional compact K\"ahler manifold with $\lambda c_1(X)>0$ ($\lambda=\pm 1$). The problem of finding canonical metrics is one of the most active topics in K\"ahler geometry. Especially we will focus on the ``coupled K\"ahler-Einstein metrics'' introduced by Hultgren-Witt Nystr\"om \cite{HN17}: for any integer $k \in \N$, a $k$-tuple of K\"ahler classes $(\Omega_i)$ is called a {\it decomposition of $2 \pi \lambda c_1(X)$} if
\[
2 \pi c_1(X)=\lambda \sum_{i=1}^k \Omega_i.
\]
Then we say that a $k$-tuple of K\"ahler metrics $\bomega=(\omega_i)$ ($\omega_i \in \Omega_i$) is {\it coupled K\"ahler-Einstein} (CKE) if it satisfies the equations
\begin{equation} \label{CKE}
\Ric(\omega_1)= \ldots =\Ric(\omega_k)=\lambda \sum_{i=1}^k \omega_i.
\end{equation}
Not that if $k=1$, then $\omega_1$ is CKE if and only if $\omega_1$ is K\"ahler-Einstein. Since this is a system of elliptic partial differential equations, several classical methods are available to construct solutions. Indeed, Hultgren and Witt-Nystr\"om \cite{HN17} demonstrated that a variational method can be used to prove existence of CKE metrics. On the other hand, another common method, the continuity method was studied by Pingali \cite{Pin18}. Examples of CKE metrics were studied on toric Fano manifolds by Hultgren \cite{Hul17}, and Delcroix-Hultgren \cite{DH18} in more general settings. Sasakian analogues were also studied by Futaki-Zhang \cite{FZ18}. Moreover, a moment map interpretation for CKE metrics was established very recently by Datar-Pingali \cite{DP19}. 

In this paper, we study a third approach, drawing some of its motivation from dynamical systems. For the moment, for any K\"ahler form $\a$, we consider the {\it $\a$-twisted K\"ahler-Einstein (TKE) metric}
\[
\Ric(\omega)=\lambda(\omega+\a),
\]
which is also a variant of K\"ahler-Einstein metrics, arises in various settings and have been discussed exhaustively (for instance, see \cite{CS16, Fin04, ST09}). The system of equations \eqref{CKE} is equivalent to say that {\it each $\omega_i$ is the $\sum_{j \neq i} \omega_j$-TKE metric for $i=1,\ldots, k$}. In order to gain more geometrical interpretations, it is interesting to study relations between the both metrics using a dynamical system: for any given $k$-tuple of K\"ahler metrics $\bomega_0:=(\omega_{0,i})$ with $\omega_{0,i} \in \Omega_i$, we define $\bomega_\ell:=(\omega_{\ell,i})$ ($\omega_{\ell,i} \in \Omega_i$, $\ell \geq 0; i=1,\ldots, k$) according to the following evolution rule\footnote{We can consider the system \eqref{CRI} for $\lambda=0$. However, in this case, the solution to the first step of \eqref{CRI} is just the $k$-tuple of Calabi-Yau (Ricci-flat) metrics \cite{Yau78}. So the iteration is accomplished in one step.}:
\begin{equation} \label{CRI}
\bomega_{\ell+1}=
\begin{pmatrix}
\omega_{\ell+1,1} \\
\vdots \\
\omega_{\ell+1,i} \\
\vdots \\
\omega_{\ell+1,k}
\end{pmatrix}
:=\cI(\bomega_\ell)
:=
\begin{pmatrix}
(\Ric-\lambda \Id)^{-1} \big[ \lambda \sum_{j>1} \omega_{\ell,j} \big] \\
\vdots \\
(\Ric-\lambda \Id)^{-1} \big[ \lambda \big(\sum_{j<i} \omega_{\ell+1, j}+\sum_{j>i} \omega_{\ell,j} \big) \big] \\
\vdots \\
(\Ric-\lambda \Id)^{-1} \big[ \lambda \sum_{j<k} \omega_{\ell+1, j} \big]
\end{pmatrix}.
\end{equation}
So the K\"ahler metric $\omega_{\ell+1,i}$ is determined as the $(\sum_{j<i} \omega_{\ell+1, j}+\sum_{j>i} \omega_{\ell,j})$-TKE metric, and the motivation for the system \eqref{CRI} is to construct a solution to the CKE equation (coupled Monge-Amp\`ere equations) by repeatedly solving the TKE equations (single Monge-Amp\`ere equation). Another, perhaps more straightforward iteration is given by
\[
\omega_{\ell+1.i}=(\Ric-\lambda \Id)^{-1} \bigg[ \lambda \sum_{j \neq i} \omega_{\ell,j} \bigg].
\]
However, we will see later that \eqref{CRI} fits better into the variational framework for CKE metrics. The well-definedness of the twisted inverse Ricci operator $(\Ric-\lambda \Id)^{-1}$ in each step is guaranteed provided the $k$-tuple $(\Omega_i)$ admits CKE metrics (see Lemma \ref{TKE} for more details). For instance, when $k=2$, the iteration process is described by
\begin{align*}
\text{Step 1}: \Ric(\omega_{1,1})=\lambda(\omega_{1,1}+\omega_{0,2}), \quad \Ric(\omega_{1,2})=\lambda(\omega_{1,1}+\omega_{1,2}), \\
\text{Step 2}: \Ric(\omega_{2,1})=\lambda(\omega_{2,1}+\omega_{1,2}), \quad \Ric(\omega_{2,2})=\lambda(\omega_{2,1}+\omega_{2,2}), \\
\text{Step 3}: \Ric(\omega_{3,1})=\lambda(\omega_{3,1}+\omega_{2,2}), \quad \Ric(\omega_{3,2})=\lambda(\omega_{3,1}+\omega_{3,2}), \\
\vdots \hspace{51mm}
\end{align*}
We would like to call \eqref{CRI} the {\it coupled Ricci iteration}. One can easily check that stationary points of the iteration are exactly CKE metrics. Moreover, the iteration \eqref{CRI} has a monotonicity property with respect to a certain energy functional on the space of K\"ahler metrics, which gives an intuition as to why this method works well (\cf Proposition \ref{gpi}). The inverse Ricci operator as well as some twisted inverse Ricci operators were introduced by Rubinstein \cite{Rub08} and as a generalization to the system \eqref{CRI}. More precisely, he introduced some dynamical systems, so called the ``time $\tau$ Ricci iteration'' as discretization of the K\"ahler-Ricci flow, which was further studied in \cite{BBEGZ11,DR19}. Motivated by Rubinstein's work, we would like to iterate the twisted inverse Ricci operator. We emphasize that
the system \eqref{CRI} does not reduce to the Rubinstein's one when $k=1$ since in this case it reduces to
\[
\omega_{1,1}=(\Ric-\lambda \Id)^{-1}(0),
\]
and the process is accomplished in one step.

In \cite{HN17}, the authors prove that when $\lambda=-1$, any decomposition of $-2\pi c_1(X)$ admits a unique CKE metric. So in this case, we can show convergence of the coupled Ricci iteration without any extra assumptions:
\begin{thm} \label{NFC}
Let $X$ be a compact K\"ahler manifold with $c_1(X)<0$, $(\Omega_i)$ a decomposition of $-2 \pi c_1(X)$ and $\bomega_0=(\omega_{0,i})$ a $k$-tuple of K\"ahler metrics with $\omega_{0,i} \in \Omega_i$. Then the coupled Ricci iteration $\bomega_\ell$ \eqref{CRI} starting from $\bomega_0$ converges to the unique coupled K\"ahler-Einstein metric as $\ell \to \infty$ in the $C^{\infty}$-topology.
\end{thm}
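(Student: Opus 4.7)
The plan is to rewrite each step of the iteration as a complex Monge--Amp\`ere equation, establish $C^{\infty}$ bounds on the potentials that are uniform in $\ell$, and conclude convergence by combining compactness with the uniqueness of the CKE metric proved in \cite{HN17}.

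First I would fix smooth references $\omega_{0,i} \in \Omega_i$, write $\omega_{\ell,i} = \omega_{0,i} + \dd \varphi_{\ell,i}$, and normalize the Ricci potentials $h_{0,i}$ of $\omega_{0,i}$ so that $\Ric(\omega_{0,i}) = -\sum_j \omega_{0,j} + \dd h_{0,i}$. A direct computation then shows that $\omega_{\ell+1,i}$ being the relevant TKE metric is equivalent to the Monge--Amp\`ere equation
\[
(\omega_{0,i} + \dd \varphi_{\ell+1,i})^n = e^{\varphi_{\ell+1,i} + \chi_{\ell+1,i} + h_{0,i}} \omega_{0,i}^n, \qquad \chi_{\ell+1,i} := \sum_{j<i} \varphi_{\ell+1,j} + \sum_{j>i} \varphi_{\ell,j},
\]
the constant in $h_{0,i}$ having been absorbed to keep the equation consistent with the volume condition. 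Because $\lambda=-1$ the sign of the exponent is favorable for the maximum principle, and the existence and uniqueness of each substep follow from Aubin--Yau (\cf Lemma \ref{TKE}).

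Next I would establish $\|\varphi_{\ell,i}\|_{C^0} \leq C$ uniformly in $\ell$. A direct maximum-principle argument controls $\osc(\varphi_{\ell+1,i})$ only in terms of $\osc(\chi_{\ell+1,i})$, and a naive recursion loses a factor depending on $k$ at each outer step, failing to close. To close the loop I would invoke the monotonicity of the Mabuchi-type functional along the iteration (Proposition \ref{gpi}), yielding $\mathcal F(\bomega_\ell) \leq \mathcal F(\bomega_0)$ for every $\ell$. In the negative case the convexity arguments in \cite{HN17} provide an inequality of the form $\mathcal F \gtrsim \sum_i J_{\omega_{0,i}}(\varphi_{\ell,i}) - C$, which bounds the $J$-energies of each component and, together with the canonical normalization imposed by the Monge--Amp\`ere equations themselves, gives the desired uniform $C^0$ bound. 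With $C^0$ in hand, the standard Aubin--Yau a priori estimates applied to each Monge--Amp\`ere equation yield uniform $C^{k,\alpha}$ bounds for every $k \in \N$.

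Finally, by Arzel\`a--Ascoli any subsequence of $\bomega_\ell$ admits a further $C^{\infty}$-convergent subsequence with some limit $\bomega_\infty$. The monotonicity of $\mathcal F$ together with the uniform lower bound forces $\mathcal F(\cI(\bomega_\infty)) = \mathcal F(\bomega_\infty)$, so the strict-decrease statement inherent in Proposition \ref{gpi} identifies $\bomega_\infty$ as a fixed point of $\cI$, hence CKE. Uniqueness of the CKE metric in the negative case from \cite{HN17} then shows $\bomega_\infty = \bomega_*$ independent of the subsequence, so $\bomega_\ell \to \bomega_*$ in $C^{\infty}$. The hard part will be the uniform $C^0$ estimate: since the Monge--Amp\`ere equations are genuinely coupled both across components $i$ within one outer step and across outer steps via the $\chi_{\ell+1,i}$ terms, the bound cannot be obtained from the maximum principle alone and must leverage the coercivity of the Mabuchi-type functional available in the negative case.
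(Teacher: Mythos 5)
Your overall strategy is the same as the paper's: rewrite each sub-step as a scalar Monge--Amp\`ere equation, obtain a uniform $C^0$ bound on the potentials, upgrade to uniform $C^\infty$ bounds, then combine Arzel\`a--Ascoli with the monotonicity and strict monotonicity of $\mathbf D$ along $\cI$ (Proposition~\ref{gpi}) and uniqueness of the CKE metric to pass to the full limit. The endgame is also as in the paper, though you should make explicit that you are invoking the continuity of $\cI$ (Lemma~\ref{continuity of the iteration map}) when you assert $\mathbf D(\cI(\bomega_\infty))=\mathbf D(\bomega_\infty)$ --- monotonicity alone does not give you this; you need $\cI(\bomega_{\ell_r})\to\cI(\bomega_\infty)$.

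Where you diverge from the paper is in the $C^0$ estimate, and the divergence is a detour rather than a different mechanism. You argue that a direct recursion fails to close, and you reach for monotonicity of $\mathbf D$ plus the coercivity estimate from \cite{HN17} to get a uniform bound on $\sum_i J_i(\phi_{\ell,i})$, and then assert that this plus ``the canonical normalization'' yields $C^0$. But the paper's Lemma~\ref{unc} closes the recursion directly, without any energy input: in the $\lambda=-1$ case, because the unknown appears in the exponent with the \emph{favorable} sign, each sub-step has a unique solution with no free additive constant, so one can take potentials $\psi_{\ell,i}$ normalized so that $\omega_{\psi_{\ell+1,i}}^n/[\Omega_i]^n = e^{\sum_{j\le i}\psi_{\ell+1,j}+\sum_{j>i}\psi_{\ell,j}}\,\theta_0^n/[\theta_0]^n$. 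Integrating forces $\int_X e^{\Phi_i}\theta_0^n=[\theta_0]^n$ exactly, where $\Phi_i$ denotes the sum in the exponent; since $\Phi_i$ is $C\theta_0$-PSH, a Green-function bound combined with Jensen's inequality gives the $\ell$-independent bound $\Phi_i\le C$, hence a uniform $L^\infty$ bound on the density, and Ko\l odziej's estimate then gives $\osc_X\psi_{\ell+1,i}\le C$. This uses neither Proposition~\ref{gpi} nor any coercivity; the bound is self-contained and manifestly independent of $\ell$. Your route via the $J$-bound can be made to work, but only once you add the same missing ingredient --- you still need Ko\l odziej (or an ABP-type estimate) to pass from a bound on the density in $L^\infty$ (or $L^p$, $p>1$) to a $C^0$ bound on the potential, which your phrase ``gives the desired uniform $C^0$ bound'' glosses over. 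The paper's argument buys you a lighter proof (independent of the variational machinery), and it also shows that your stated worry --- that ``a naive recursion loses a factor depending on $k$ at each outer step'' --- is not the real obstruction once the canonical $\lambda=-1$ normalization and Green/Jensen are exploited.
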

In Theorem \ref{NFC}, we used the uniqueness theorem \cite[Theorem 1.3]{HN17}, while Theorem \ref{NFC} gives an alternative proof for existence based on the continuity (\cf Lemma \ref{continuity of the iteration map}) and strict monotonicity of the iteration map $\cI$ (\cf Proposition \ref{gpi}).

However, in the $\lambda=1$ case, the existence of CKE metrics is more subtle, and a certain algebraic stability condition for CKE, generalizing K-polystability was introduced in \cite{HN17}. However, from the analytic point of view, existence of canonical K\"ahler metrics should have an equivalent formulation in terms of coercivity of certain energy functionals. Thus we study this direction with the aid of the general framework developed by Darvas-Rubinstein \cite{DR15}, and obtain the following:
\begin{thm} \label{Coercivity}
Let $X$ be a compact K\"ahler manifold with $c_1(X)>0$ and $(\Omega_i)$ be a decomposition of $2 \pi c_1(X)$. Then $(X, (\Omega_i))$ admits a coupled K\"ahler-Einstein metric if and only if the Ding functional ${\bf D}$ is ${\bf J}_G$-coercive.
\end{thm}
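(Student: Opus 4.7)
The plan is to reduce Theorem \ref{Coercivity} to the abstract variational principle of Darvas--Rubinstein \cite{DR15}. The natural setting is the product of finite-energy potential spaces
\[
\cE^1 := \cE^1(X,\Omega_1) \times \cdots \times \cE^1(X,\Omega_k),
\]
equipped with the product metric $d_1(\bphi,\bpsi) = \sum_i d_1(\phi_i,\psi_i)$. On this space, the Ding functional takes the shape ${\bf D}(\bphi) = -\sum_i V_i^{-1} {\bf E}_{\Omega_i}(\phi_i) - \log \int_X e^{-\sum_i \phi_i}\, \Omega$ for a suitable reference volume form $\Omega$ adapted to the decomposition, and a standard first variation computation identifies its smooth critical points with CKE potentials. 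The reduced automorphism group $G := \Aut_0(X)$ acts diagonally by pullback, preserving ${\bf D}$, and the coercivity modulus ${\bf J}_G(\bphi) = \inf_{g \in G} {\bf J}(g^*\bphi)$ is defined as the $G$-minimum of a sum of $\mathbf{J}$-functionals over the factors.

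To invoke the Darvas--Rubinstein machinery I would verify three structural properties of ${\bf D}$ on $\cE^1$. First, $d_1$-lower semicontinuity, where the energy terms ${\bf E}_{\Omega_i}$ are continuous by Darvas's metric theory and the coupled log term is upper semicontinuous via the pluripotential estimates of Berman--Boucksom--Guedj--Zeriahi \cite{BBEGZ11}. Second, convexity of ${\bf D}$ along product weak geodesics: the energy part is affine by definition, while convexity of $\bphi \mapsto -\log \int e^{-\sum_i \phi_i}\,\Omega$ along product geodesics follows from Berndtsson's subharmonicity theorem applied factorwise together with the elementary fact that sums of psh functions are psh in the geodesic parameter. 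Third, the regularity statement that any finite-energy minimizer of ${\bf D}$ is in fact smooth and CKE, which is obtained by bootstrapping from $L^\infty$-estimates \`a la Ko\l{}odziej applied to each component of the coupled Monge--Amp\`ere system, following the scheme of Hultgren--Witt Nystr\"om \cite{HN17}.

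Granting these three ingredients, Theorem \ref{Coercivity} is a direct application of the Darvas--Rubinstein abstract equivalence. For the forward direction, a CKE metric is a critical point of ${\bf D}$ and hence a $d_1$-minimizer by geodesic convexity; strict convexity transverse to the $G$-orbit, coming from the strict Berndtsson convexity once the trivial direction along $G$ is quotiented out, combined with the DR formalism yields the inequality ${\bf D} \geq \e {\bf J}_G - C$. For the reverse direction, ${\bf J}_G$-coercivity forces any minimizing sequence, after a $G$-normalization achieving $\mathbf{J}_G$, to be $d_1$-bounded; by $d_1$-completeness of $\cE^1$ it sub-converges to a finite-energy minimizer, which is smooth and CKE by the regularity step above.

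I expect the main obstacle to be the strict convexity of the coupled log term transverse to the diagonal $G$-action: since the factors are mixed by the exponent $\sum_i \phi_i$, the equality case of Berndtsson's theorem must be analyzed carefully in the product setting to ensure that affine families of minimizers come from $G$. A secondary but concrete technical point is isolating the correct reference form $\Omega$ and the correct normalization of $\mathbf{E}_{\Omega_i}$ so that the integration-by-parts identities underlying the first variation and the Darvas--Rubinstein axioms close up; this is largely a bookkeeping exercise, but it must be handled before the abstract theorem can be applied.
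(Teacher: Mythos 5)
Your proposal takes essentially the same route as the paper: reduce to the abstract properness framework of Darvas--Rubinstein \cite{DR15}, set up the product finite-energy space with the product $d_1$-metric, use Berndtsson convexity on $\sum_i \phi_i$ for convexity of the log term, and use Ko\l{}odziej-type estimates and HN regularity for smoothness of minimizers. These correspond to hypotheses (P1)--(P3) of \cite[Hypothesis 3.2]{DR15}, and your treatment of them is sound.

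However, the DR theorem is not merely the ``three structural properties'' you list; it requires the full list (P1)--(P7), and the genuinely delicate part in this coupled setting is the group structure, which your proposal underestimates. Concretely, two things are missing. First, (P6) asks that the infimum defining ${\bf d}_{1,G}$ be \emph{attained} by some $g \in G$ whenever a minimizer exists; in the DR framework this hinges on $G$ being the complexification of a compact group $K$ fixing the minimizer. In the coupled case this is not automatic from a single factor: one must prove a Matsushima-type statement that for a CKE tuple $\bomega=(\omega_i)$ the isometry groups coincide, $\Iso(X,\omega_1)=\cdots=\Iso(X,\omega_k)=:K$, and that $G=K^\C$. The paper proves this (Lemma \ref{MTD}) by combining the CKE equation with uniqueness of the Calabi--Yau solution, and this lemma is essential; without it there is no candidate $K$ to which to apply \cite[Propositions 6.2, 6.8]{DR15}. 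Second, (P5) --- transitivity of $G$ on the minimizer set --- is exactly the ``equality case of Berndtsson in the product setting'' that you flag as a main obstacle, but this is already \cite[Theorem 1.5]{HN17} and should be cited rather than re-derived. Your phrasing also suggests that ``strict convexity transverse to the $G$-orbit'' directly yields the coercivity inequality; in the DR machinery coercivity is instead extracted from (P1)--(P7) via a metric argument, and strict transverse convexity alone does not give the linear lower bound. Finally, minor point: the energy part of ${\bf D}$ is affine along $C^{1,1}$-geodesics as a nontrivial fact, not ``by definition,'' and (P7), the $G$-invariance of differences of ${\bf D}$, should be checked explicitly (the paper does so via the path-integral formula for ${\bf D}$).
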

These functionals are essentially just the summation of the corresponding functionals for the $k$ classes $(\Omega_i)$.
For the precise definition of functionals and coercivity, see Section \ref{FEP}.
As an application, we also prove the smooth convergence of the coupled Ricci iteration under restrictive assumptions on the automorphism group.
\begin{thm} \label{PFC}
Let $X$ be a compact K\"ahler manifold with $c_1(X)>0$ and discrete automorphism group, $(\Omega_i)$ a decomposition of $2 \pi c_1(X)$ admitting a coupled K\"ahler-Einstein metric. Then for any $k$-tuple of K\"ahler metrics $\bomega_0=(\omega_{0,i})$ with $\omega_{0,i} \in \Omega_i$, the coupled Ricci iteration $\bomega_\ell$ \eqref{CRI} starting from $\bomega_0$ converges to the coupled K\"ahler-Einstein metric as $\ell \to \infty$ in the $C^\infty$-topology.
\end{thm}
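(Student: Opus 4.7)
The plan is to combine the monotonicity supplied by Proposition~\ref{gpi} with the coercivity characterization in Theorem~\ref{Coercivity}, extract a $C^\infty$ subsequential limit, and then promote this to full convergence via uniqueness of CKE metrics when $\Aut(X)$ is discrete. Let $\bphi_\ell=(\phi_{\ell,i})$ denote the potentials of $\bomega_\ell$ relative to fixed reference K\"ahler forms $\theta_i \in \Omega_i$, normalized so that $\sup\phi_{\ell,i}=0$.

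\emph{Step 1: a priori energy bound.} Because $\Aut(X)$ is discrete, the group $G$ in Theorem~\ref{Coercivity} is trivial, and the existence of a CKE metric upgrades to ${\bf J}$-coercivity of the Ding functional $\mathbf{D}$. By Proposition~\ref{gpi}, $\ell\mapsto \mathbf{D}(\bphi_\ell)$ is non-increasing, and $\mathbf{D}$ is bounded below since it attains its infimum at the CKE metric. Consequently ${\bf J}(\bphi_\ell)$ is uniformly bounded, which with the sup-normalization yields uniform $L^1$ bounds on each $\phi_{\ell,i}$ by standard pluripotential theory.

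\emph{Step 2: higher regularity.} Each component $\omega_{\ell+1,i}$ solves a single twisted K\"ahler-Einstein equation (\cf Lemma~\ref{TKE}) whose twist form is a sum of iterates $\omega_{\ell+1,j}$ ($j<i$) and $\omega_{\ell,j}$ ($j>i$). The Ding-entropy control inherited from Step~1 feeds into a Tian-type $\alpha$-invariant argument (\ie a Kolodziej-style $L^\infty$ estimate applied to each Monge-Amp\`ere equation in the coupled system), promoting the $L^1$ bound to a uniform $C^0$ bound on $\bphi_\ell$. Standard Evans-Krylov plus Schauder bootstrap using the explicit TKE equations then yields uniform $C^k$ bounds for every $k$, so that every subsequence of $\bomega_\ell$ admits a $C^\infty$-convergent sub-subsequence.

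\emph{Step 3: identification and uniqueness of the limit.} By the continuity of $\cI$ (Lemma~\ref{continuity of the iteration map}) and the strict monotonicity of $\mathbf{D}$ along the iteration (Proposition~\ref{gpi}), any $C^\infty$ subsequential limit $\bomega_\infty$ must satisfy $\cI(\bomega_\infty)=\bomega_\infty$, \ie $\bomega_\infty$ is a CKE metric. A Bando-Mabuchi-type uniqueness statement for CKE metrics in the absence of holomorphic vector fields (available as a direct consequence of strict convexity of $\mathbf{D}$ along ${\bf J}$-coercivity, since $G=\{\Id\}$) then forces $\bomega_\infty$ to be the unique CKE metric, so the whole sequence $\bomega_\ell$ converges to it in $C^\infty$.

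\emph{Main obstacle.} The delicate point is the uniform $L^\infty$ estimate in Step~2: in the $\lambda=1$ case, each step of \eqref{CRI} involves a Monge-Amp\`ere equation with $e^{-\phi}$-type right-hand side, and the coupling introduces interaction between the $k$ potentials at each iterate. Propagating the ${\bf J}$-energy bound through this coupled system without losing uniformity in $\ell$ requires that the Fano pluripotential machinery (coercivity on the full space of K\"ahler potentials rather than on a compactly contained family) be applied simultaneously to all $k$ components, and this is where the equivalence supplied by Theorem~\ref{Coercivity} is indispensable.
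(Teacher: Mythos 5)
Your Steps~1 and~3 closely mirror the paper: $G$ trivial $\Rightarrow$ $\mathbf{J}$-coercivity by Theorem~\ref{Coercivity}, Proposition~\ref{gpi} gives $\mathbf{D}(\bphi_\ell)$ non-increasing and bounded below so $\mathbf{J}(\bphi_\ell)\leq C$, and once a uniform $C^0$ bound is in hand Lemma~\ref{HOE} plus Arzel\`a-Ascoli, continuity of $\cI$, strict monotonicity and uniqueness force full $C^\infty$-convergence to the CKE metric. That part is fine.

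The gap is in Step~2, and you correctly flag it as the ``main obstacle'' but then do not resolve it. The phrase ``a Tian-type $\alpha$-invariant argument / Kolodziej-style $L^\infty$ estimate'' does not produce a uniform $C^0$ bound here without additional input. In the Fano case the right-hand side of each Monge-Amp\`ere step is $e^{-(\sum_{j\leq i}\phi_{\ell+1,j}+\sum_{j>i}\phi_{\ell,j})}\theta_0^n$ up to normalization, and Ko\l odziej's estimate needs an $L^p$ bound ($p>1$) on this density that is \emph{uniform in $\ell$}. Skoda's integrability theorem gives such a bound for each fixed iterate (and the paper uses it this way inside Lemma~\ref{continuity of the iteration map} to show each step is well-posed), but turning it into a uniform-in-$\ell$ bound is circular: you would need a uniform sup/$L^\infty$ bound on the $\phi_{\ell,j}$, which is exactly what you are trying to prove. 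The $\mathbf{J}$-bound from Step~1 only controls $\int_X\phi_{\ell,i}\,\omega_{0,i}^n$ under the sup-normalization, i.e.\ an $L^1$ bound, and there is no direct passage from $L^1$ to $L^\infty$ via Ko\l odziej without the uniform $L^p$ density control.

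What the paper actually uses is a geometric mechanism specific to this iteration that your proposal omits entirely: along the coupled Ricci iteration one has $\Ric(\omega_{\ell+1,i})=\sum_{j\leq i}\omega_{\ell+1,j}+\sum_{j>i}\omega_{\ell,j}>\omega_{\ell+1,i}$, a uniform positive Ricci lower bound. This makes Myers' theorem applicable to get $\mathrm{diam}(X,\omega_{\ell,i})\leq C$, and then Bando--Mabuchi's Green function estimate (\cite[Theorem 3.2]{BM85}) gives a uniform bound on $A_{\ell,i}=-\inf G_{\ell,i}$. Combined with the standard two-sided Green function representation ($n+\Delta_{0,i}\phi_{\ell,i}>0$ and $n-\Delta_{\ell,i}\phi_{\ell,i}>0$), this yields $\osc_X\phi_{\ell,i}\leq n(A_{0,i}+A_{\ell,i})+I_i(\phi_{\ell,i})$, and the $I$-term is controlled by \eqref{JAI} and the $\mathbf{J}$-bound from Step~1. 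That chain of ideas --- uniform Ricci lower bound $\Rightarrow$ Myers $\Rightarrow$ Bando--Mabuchi Green bound $\Rightarrow$ oscillation bound --- is the genuinely new ingredient needed in the Fano case, and it is missing from your argument. To fix the proposal you should replace the $\alpha$-invariant/Ko\l odziej heuristic in Step~2 with this Green function argument.
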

Fortunately, one has a uniform positive Ricci lower bound along the iteration, and can use a Green function argument essentially due to Bando-Mabuchi \cite{BM85} for the $C^0$-estimates. As is the case with the time $\tau$ Ricci iteration \cite{DR19}, we expect that the assumption on $\Aut_0(X)$ can be removed and convergence of the coupled Ricci iteration holds in the $C^{\infty}$ Cheeger-Gromov sense.

This paper is organized as follows. In Section \ref{PLS}, we recall some basic results for the $L^1$-Finsler geometry and functionals on the space of PSH potentials using the framework by Darvas-Rubinstein \cite{DR15}. We introduce ${\bf J}_G$-coercivity and prove Theorem \ref{Coercivity}. See \cite{HN17, Rub14} for a standard exposition on (coupled) K\"ahler-Einstein metrics and \cite{Dar19,Rub18} for the exposition on Tian's properness conjectures, which both give a survey of \cite{DR15}. We also discuss a Matsushima type decomposition and the well-definedness of the twisted inverse Ricci operator $(\Ric-\lambda \Id)^{-1}$. In Section \ref{CIB}, we study some crucial properties for the iteration map $\cI$ (\cf Lemma \ref{continuity of the iteration map}, Proposition \ref{gpi}). Then we prove smooth convergence of the coupled Ricci iteration in the two cases $\lambda=\pm 1$ separately.
\begin{ackn}
The author expresses his gratitude to Prof. Shigetoshi Bando for helpful conversations and pointing out the formula \eqref{isometry}. The author was supported by Grant-in-Aid for JSPS Fellows Number 16J01211.
\end{ackn}
\section{Preliminaries} \label{PLS}
\subsection{$L^1$-Finsler geometry and energy functionals on the space of PSH potentials} \label{FEP}
Let $X$ be a compact K\"ahler manifold with $\lambda c_1(X)>0$, $\omega_0$ a K\"ahler metrics in a K\"ahler class $\Omega$. First, we set
\[
\cH:=\{ \phi \in C^{\infty}(X;\R)|\omega_{\phi}:=\omega_0+\dd \phi>0\}.
\]
For a smooth curve $\a \colon [0,1] \to \cH$, the length of $\a$ is defined by
\[
\ell_1(\a):=\int_0^1 \int_X |\dot{\a}(t)| \omega_{\alpha(t)}^n dt.
\]
Then for any $\phi, \psi \in \cH$, we define the $L^1$-Finsler metric by
\[
d_1(\phi,\psi):=\inf \{ \ell_1(\a)| \text{$\a \colon [0,1] \to \cH$ is a smooth curve with $\a(0)=\phi$ and $\a(1)=\psi$} \}.
\]
The {\it Aubin-Mabuchi energy} is defined by
\[
\AM(\phi):=\frac{1}{n+1} \sum_{j=0}^n \int_X \phi \omega_{\phi}^j \wedge \omega_0^{n-j}.
\]
Set also
\[
\tilde{\cH}:=\cH \cap \AM^{-1}(0).
\]
Then the subspace $\tilde{\cH}$ is isomorphic to
\[
\cK:=\{\omega_{\phi} \in [\omega_0]|\phi \in \cH\}
\]
via the natural correspondence $\phi \mapsto \omega_{\phi}$. We use this isomorphism to endow $\cK$ with a metric structure, by pushing forward the $L^1$-Finsler metric on $\cH$. The Aubin-Mabuchi energy has an extension to $\PSH(X, \omega_0)$, the space of $\omega_0$-PSH functions given by
\[
\AM(\phi):=\inf \{ \AM(\psi) |\psi \in \PSH(X, \omega_0), \text{$\psi$ is smooth}, \psi \geq \phi \}.
\]
Then the finite energy class $\cE^1$ is defined by
\[
\cE^1:=\{ \phi \in \PSH(X, \omega_0) | \AM(\phi)>-\infty \}.
\]
The Aubin-Mabuchi energy satisfies the scaling property $\AM(\phi+c)=\AM(\phi)+[\omega_0]^n \cdot c$ for all $\phi \in \cE^1$ and $c \in \R$. The space $\cE^1$ also arises as the completion of the metric space $(\cH, d_1)$ \cite[Lemma 5.2]{DR15}. We define the functionals $I$, $J$ on $\cE^1$ as follows
\[
J(\phi):=-\frac{1}{[\Omega]^n} \AM(\phi)+\frac{1}{[\Omega]^n} \int_X \phi \omega_0^n,
\]
\[
I(\phi):=\frac{1}{[\Omega]^n} \int_X \phi(\omega_0^n-\omega_{\phi}^n),
\]
where $\omega_{\phi}^n$ means the non-pluripolar product (\cf \cite{BEGZ10}). These functionals are comparable to each other as follows (\cf \cite[Section 2.1]{BBGZ13})
\begin{equation} \label{JAI}
\frac{1}{n+1} I(\phi) \leq J(\phi) \leq I(\phi).
\end{equation}
Also under the $\AM$-normalization, $d_1(0,\cdot)$ is equivalent to $J$ (\cf \cite[Lemma 5.11]{DR15}):
\begin{equation} \label{Jde}
\frac{1}{C} J(\phi)-C \leq d_1(0,\phi) \leq C J(\phi)+C, \quad \phi \in \cE^1 \cap \AM^{-1}(0).
\end{equation}

Next we work in the CKE setting. Let $(\Omega_i)$ be a decomposition of $2 \pi \lambda c_1(X)$ and $\bomega_0=(\omega_{0,i})$ a $k$-tuple of K\"ahler metrics with $\omega_{0,i} \in \Omega_i$. We take also a K\"ahler form $\theta_0$\footnote{The definition of $\theta_0$ is used only for a Green function estimate in the proof of Lemma \ref{unc}.} satisfying the Calabi-Yau equation \cite{Yau78}
\[
\Ric(\theta_0)=\lambda \sum_i \omega_{0,i}.
\]
Applying the above constructions to each variable $i$, we obtain the corresponding spaces and functionals ($\cH_i$, $\cE^1_i$, $\AM_i$, etc). For $\bphi=(\phi_i) \in \prod_i \cE^1_i$, we define
\[
{\bf J}(\bphi):=\sum_{i=1}^k J_i(\phi_i),
\]
\[
{\bf L}(\bphi):=-\lambda \log \int_X e^{-\lambda \sum_i \phi_i} \theta_0^n.
\]
Then the {\it Ding functional} introduced in \cite{HN17} is given by
\[
{\bf D}(\bphi):=-\sum_{i=1}^k \frac{1}{[\Omega_i]^n} \AM_i(\phi_i)+{\bf L}(\bphi).
\]
Also let $\rho_i(\bphi) \in C^\infty(X;\R)$ ($i=1,\ldots,k$) be the {\it coupled Ricci potential} uniquely determined by
\[
\Ric(\omega_{\phi_i})-\sum_j \omega_{\phi_j}=\dd \rho_i(\bphi), \quad \int_X e^{\rho_i(\bphi)} \omega_{\phi_i}^n=[\Omega_i]^n.
\]
Then a direct computation shows that
\[
\d{\bf D}|_{\bphi}(\d \bphi)=-\sum_i \frac{1}{[\Omega_i]^n} \int_X \d \phi_i(1-e^{\rho_i(\bphi)})\omega_{\phi_i}^n.
\]
So the Euler-Lagrange equation of ${\bf D}$ is given by
\[
\rho_1(\bphi)=\ldots=\rho_k(\bphi)=0,
\]
which is equivalent to say that $\bphi$ is CKE.

Finally, we take account of the action of automorphisms $\Aut(X)$. Set $G:=\Aut_0(X)$, the identity component of the automorphism group $\Aut(X)$. Then $g \in G$ acts on $\cK$ by pullback. Given one-to-one correspondence between $\cK$ and $\cH$, the group $G$ also acts on $\cH$, \ie for any $g \in G$ and $\phi \in \cH$, $g. \phi$ is defined as the unique element in $\tilde{\cH}$ such that $g^\ast \omega_\phi=\omega_{g.\phi}$. Then the $G$-invariant functional ${\bf J}_G \colon \prod_i (\cE^1_i \cap \AM_i^{-1}(0)) \to \R$ is defined as
\[
{\bf J}_G(\bphi):=\inf_{g \in G} {\bf J}(g. \bphi).
\]
The functional ${\bf J}_G$ plays a role of an exhaustion function on $\big( \prod_i \tilde{\cH}_i \big)/G$.
\begin{dfn}
We say that the Ding functional ${\bf D}$ is ${\bf J}_G$-coercive if there exists some constants $\d, C>0$ such that
\[
{\bf D}(\bphi) \geq \d {\bf J}_G(\bphi)-C, \quad \bphi \in \prod_i \tilde{\cH}_i.
\]
\end{dfn}
Since each $(\cH_i,d_{1,i})$ is a metric space, the space $\prod_i \cH_i$ has a product metric
\[
{\bf d}_1(\bphi,\bpsi):=\sum_i d_{1,i}(\phi_i,\psi_i).
\]
We also set
\[
{\bf d}_{1,G}(\bphi,\bpsi):=\inf_{f,g \in G} {\bf d}_1(f. \bphi, g. \bpsi), \quad \bphi, \bpsi \in \prod_i \tilde{\cH}_i.
\]
Now we are ready to prove Theorem \ref{Coercivity}. The proof contains some overlap with the K\"ahler-Einstein case \cite[Theorem 7.1]{DR15}.
\begin{proof}[Proof of Theorem \ref{Coercivity}]
We want to apply the general theory for coercivity estimates established in \cite[Theorem 3.4]{DR15} to the quadruple $(\prod_i \tilde{\cH}_i, {\bf d}_1, {\bf D}, G)$. So we have to check that the data $(\prod_i \tilde{\cH}_i, {\bf d}_1, {\bf D}, G)$ satisfies (P1)--(P7) in \cite[Hypothesis 3.2]{DR15}. Let $\cM$ be the set of minimizers of ${\bf D}$ on $\prod_i (\cE_i^1 \cap \AM^{-1}_i(0))$.

\vspace{4mm}

(P1) \emph{For any $\bphi(0), \bphi(1) \in \prod_i \tilde{\cH}_i$ there exists a ${\bf d}_1$-geodesic segment $[0,1] \mapsto \bphi(t) \in \prod_i (\cE_i^1 \cap \AM^{-1}_i(0))$ for which $t \mapsto {\bf D}(\bphi(t))$ is continuous and convex on $[0,1]$.}

\vspace{2mm}

For any $\bphi(0), \bphi(1) \in \prod_i \tilde{\cH}_i$, we take a $k$-tuple of $C^{1,1}$-geodesic segments $\bphi(t):=(\bphi_i(t))$ connecting $\bphi(0)$ with $\bphi(1)$. Clearly, the segment $\bphi(t)$ satisfies the midpoint property, and hence is ${\bf d}_1$-geodesic in $\prod_i (\cE_i^1 \cap \AM^{-1}_i(0))$. Moreover, since $\sum_i \phi_i(t)$ is a $\PSH(\sum_i \omega_{0,i})$-subgeodesic, we can apply Berndtsson's convexity theorem \cite[Theorem 1.1]{Ber15} to conclude that $t \mapsto {\bf D}(\bphi(t))$ is continuous and convex on $[0,1]$.

\vspace{4mm}

(P2) \emph{If $\bphi_m=(\phi_{m,i}) \in \prod_i (\cE^1_i \cap \AM_i^{-1}(0))$ satisfies
\[
\lim_{m \to \infty} {\bf D}(\bphi_m)=\inf_{\prod_i (\cE^1_i \cap \AM_i^{-1}(0))} {\bf D}, \quad {\bf d}_1({\bf 0}, \bphi_m) \leq C,
\]
then there exists an element $\bphi \in \cM$ and a subsequence $\{\bphi_{m_r}\}_r$ ${\bf d}_1$-converging to $\bphi$.}

\vspace{2mm}

Since ${\bf d}_1({\bf 0}, \bphi_m) \leq C$, it follows from \cite[Corollary 5.8]{Dar14} and (27) in \cite{DR15} that $|\sup_X \phi_{m,i}| \leq C$. So by taking a subsequence, we have $\bphi_m \to \bphi=(\phi_i) \in  \prod_i \PSH(X,\omega_{0,i})$ in $L^1$ (\cf \cite[Proposition I.4.21]{Dem12}). Since $\AM_i$ is upper semi-continuous, we have
\[
0=\limsup_{m \to \infty} \AM_i(\phi_{m,i}) \leq \AM_i(\phi_i) \leq [\Omega_i]^n \cdot \limsup_{m \to \infty} \sup_X \phi_{m,i} \leq C.
\]
This shows that $\bphi \in \prod_i \cE^1_i$. Next we show that ${\bf L}(\bphi_m) \to {\bf L}(\bphi)$. Applying $|e^a-e^b| \leq |a-b|(e^a+e^b)$ (for $a, b \in \R$) and the H\"older inequality, we have
\begin{eqnarray*}
\bigg| \int_X (e^{-\sum_i \phi_{m,i}}-e^{-\sum_i \phi_i})\theta_0^n \bigg| &\leq& \int_X \bigg|\sum_i \phi_{m,i}-\sum_i \phi_i \bigg|(e^{-\sum_j \phi_{m, j}}+e^{-\sum_j \phi_j})\theta_0^n \\
&\leq& \int_X \sum_i |\phi_{m,i}-\phi_i|(e^{-\sum_j \phi_{m,j}}+e^{-\sum_j \phi_j}) \theta_0^n\\
&\leq& \sum_i \| \phi_{m,i}-\phi_i \|_{L^2(X,\theta_0^n)} (\| e^{-\sum_j \phi_{m, j}}\|_{L^2(X,\theta_0^n)}\\
&+& \| e^{-\sum_j \phi_j}\|_{L^2(X,\theta_0^n)}).
\end{eqnarray*}
Then the fundamental inequality $|a-b|^2 \leq C|e^a-e^b|$ (for $a,b \geq 0$) together with the semi-continuity theorem \cite{DK01} yields that the first term converges to zero. On the other hand, since $\bphi_m, \bphi \in \prod_i \cE^1_i$, $\AM_i(\phi_{m,i})=0$ and $|\sup_X \phi_{m,i}| \leq C$, Skoda's (uniform) integrability theorem \cite{Zer01} assures that
\[
\int_X e^{-p\phi_{m,i}} \theta_0^n \leq C, \quad \int_X e^{-p\phi_i} \theta_0^n \leq C
\]
for all $i$ and $p>0$. Thus by the H\"older inequality, we find that $\| e^{-\sum_j \phi_{m,j}}\|_{L^2(X,\theta_0^n)}$ and $\| e^{-\sum_j \phi_j}\|_{L^2(X,\theta_0^n)}$ are uniformly bounded. Eventually, we obtain
\[
\lim_{m \to \infty} {\bf D}(\bphi_m) \geq -\sum_i \frac{1}{[\Omega_i]^n} \limsup_{m \to \infty} \AM_i (\phi_{m, i})+{\bf L}(\bphi) \geq {\bf D}(\bphi).
\]
This shows that $\bphi \in \prod_i \cE^1_i$ is a minimizer of ${\bf D}$. Finally, since the equality holds in the above, we have
\[
\sum_i \frac{1}{[\Omega_i]^n} \limsup_{m \to \infty} \AM_i (\phi_{m, i})=\sum_i \frac{1}{[\Omega_i]^n} \AM_i (\phi_i).
\]
By using the upper semi-continuity $\limsup_{m \to \infty} \AM_i(\phi_{m,i}) \leq \AM_i(\phi_i)$ again, we have $\limsup_{m \to \infty} \AM_i (\phi_{m, i})=\AM_i (\phi_i)$ for each $i$. Taking a further subsequence, we obtain $\lim_{m \to \infty} \AM_i(\phi_{m,i})=\AM_i(\phi_i)$. This together with the $L^1$-convergence $\bphi_m \to \bphi$ and \cite[Lemma 5.1 (i)]{DR15} yields that ${\bf d}_1(\bphi_m, \bphi) \to 0$ and $\bphi \in \prod_i (\cE^1_i \cap \AM^{-1}_i(0))$ as desired.

\vspace{4mm}

(P3) \emph{$\cM \subset \prod_i \tilde{\cH}_i$.}

\vspace{2mm}

This is \cite[Theorem 2.9]{HN17}.

\vspace{4mm}

(P4) \emph{$G$ acts on $\prod_i \tilde{\cH}_i$ by ${\bf d}_1$-isometries.}

\vspace{2mm}

This follows from \cite[Lemma 5.9]{DR15}.

\vspace{4mm}

(P5) \emph{$G$ acts on $\cM$ transitively.}

\vspace{2mm}

This is \cite[Theorem 1.5]{HN17}.

\vspace{4mm}

(P6) \emph{If $\cM \neq \emptyset$, then for any $\bphi, \bpsi \in \prod_i \tilde{\cH}_i$ there exists $g \in G$ such that ${\bf d}_{1,G}(\bphi,\bpsi)={\bf d}_1(\bphi, g. \bpsi)$.}

\vspace{2mm}

Suppose $\bphi=(\phi_i) \in \prod_i \tilde{\cH}_i$ is a CKE metric. Thanks to Lemma \ref{MTD}, if we set $K:=\Iso(X,\omega_{\phi_1})=\ldots=\Iso(X,\omega_{\phi_k})$, then the pair $(G,K)$ satisfies the assumption in \cite[Proposition 6.2]{DR15}, which is needed to apply \cite[Proposition 6.8]{DR15}. In addition to this, we have to check the following conditions (i)-(iii):

(i) By the definition of $K$, we have $K. \bphi=\bphi$.

(ii) For each $V \in \Lie(K)$ and geodesic ray $\exp(tJV). \phi_i$, we can use the same argument as in the proof of \cite[Theorem 7.1]{DR15}, and show that $\exp(tJV). \bphi$ is a $d_1$-geodesic.

(iii) For any fixed $\bpsi=(\psi_i), \bvarphi=(\varphi_i) \in \prod_i \tilde{\cH}_i$, the continuity of the map $G \times G \ni (f,g) \mapsto {\bf d}_1(f.\bvarphi, g.\bpsi)$ follows from the continuity of $G \times G \ni (f,g) \mapsto d_{1,i}(f.\varphi_i, g.\psi_i)$ for each $i$ (see the proof of \cite[Theorem 7.1]{DR15}).

\vspace{4mm}

(P7) \emph{For all $\bphi, \bpsi \in \prod_i \tilde{\cH}_i$ and $g \in G$, ${\bf D}(\bpsi)-{\bf D}(\bphi)={\bf D}(g. \bpsi)-{\bf D}(g. \bphi)$.}

\vspace{2mm}

By the definition of ${\bf D}$, the functional ${\bf D}$ can be written as the following path-integral
\[
{\bf D}(\bphi)=\sum_i \frac{1}{[\Omega_i]^n} \int_0^1 \int_X \dot{\phi}_i(t) e^{\rho_i(\bphi(t))} \omega_{\phi_i (t)}^n dt,
\]
where $\bphi \colon [0,1] \to \prod_i \tilde{\cH}_i$ is a smooth path with $\bphi(0)={\bf 0}$ and $\bphi(1)=\bphi$. From the normalization of $\rho_i(\bphi)$, one can easily check that $g^{\ast} \rho_i(\bphi)=\rho_i(g. \bphi)$ for all $g \in G$ and $\bphi \in \prod_i \tilde{\cH}_i$. Thus the functional ${\bf D}$ is a path-integral of the $G$-invariant closed 1-form $\sum_i \frac{1}{[\Omega_i]^n} \int_X \: \cdot \: e^{\rho_i(\bphi)} \omega_{\phi_i}^n$ on $\prod_i \tilde{\cH}_i$. Hence we have
\[
{\bf D}(\bpsi)-{\bf D}(\bphi)={\bf D}(g. \bpsi)-{\bf D}(g. \bphi)
\]
for all $\bphi, \bpsi \in \prod_i \tilde{\cH}_i$ and $g \in G$.

\vspace{4mm}

With all these observations, we have shown that ${\bf D}$ is ${\bf d}_{1,G}$-coercive, \ie there exist constants $\d, C>0$ such that
\[
{\bf D}(\bphi) \geq \d {\bf d}_{1,G}(0,\bphi)-C.
\]
Finally, this is equivalent to ${\bf J}_G$-coercivity by \eqref{Jde}.
\end{proof}
\subsection{Matsushima's type decomposition}
\begin{lem}[See also Corollary 1.6 in \cite{HN17}] \label{MTD}
Let $X$ be a compact K\"ahler manifold with $c_1(X)>0$, and $(\Omega_i)$ a decomposition of $2 \pi c_1(X)$ admitting a coupled K\"ahler-Einstein metric $\bomega=(\omega_i)$. Then we have
\begin{equation} \label{isometry}
K:=\Iso(X,\omega_1)=\ldots=\Iso(X, \omega_k),
\end{equation}
and $G=K^\C$. In particular, $G$ is reductive.
\end{lem}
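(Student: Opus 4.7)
I would prove the two assertions separately: \eqref{isometry}, identifying the several single-form isometry groups, and the Matsushima-type equality $G=K^\C$.

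For \eqref{isometry}, the plan is a short uniqueness argument based on the complex Monge--Amp\`ere equation. Take $g$ in the identity component of $\Iso(X,\omega_1)$; since this component is contained in $G=\Aut_0(X)$, the action of $g$ on $H^{1,1}(X,\R)$ is trivial and so $g^*\omega_j\in\Omega_j$ for every $j$. The hypothesis $g^*\omega_1=\omega_1$ forces $g^*\Ric(\omega_1)=\Ric(\omega_1)$; writing $\rho:=\sum_k\omega_k$ and using the CKE identity $\Ric(\omega_1)=\rho$, one concludes $g^*\rho=\rho$. Consequently, for every $j$,
\[
\Ric(g^*\omega_j)=g^*\Ric(\omega_j)=g^*\rho=\rho=\Ric(\omega_j),
\]
and $g^*\omega_j,\omega_j$ both lie in $\Omega_j$. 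The required uniqueness --- two K\"ahler forms in a common class with equal Ricci forms coincide --- is standard: writing $\omega'=\omega+\dd u$, the identity $\dd\log(\omega'^n/\omega^n)=\Ric(\omega)-\Ric(\omega')=0$ makes $\log(\omega'^n/\omega^n)$ a pluriharmonic, hence constant, function, and comparing total volumes forces the constant to be $0$, so $(\omega+\dd u)^n=\omega^n$ and Calabi's uniqueness yields $u$ constant. Thus $g^*\omega_j=\omega_j$, and \eqref{isometry} follows by symmetry; the finitely many non-identity components of $\Iso(X,\omega_i)$ share the same Lie algebra and are handled analogously once one notes that they still preserve each class $\Omega_j$.

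For the second assertion I would adapt the classical Matsushima/Bochner argument to the CKE system. Let $V\in\fg:=\Lie(G)$ be a real holomorphic vector field and let $f_i(V)=u_i+\i v_i$ be its holomorphy potential with respect to $\omega_i$, normalized by $\int_X f_i\,\omega_i^n=0$. From $\mathcal{L}_V\omega_i=-2\i\partial\bar\partial u_i$ and the variational formula $\delta\Ric(\omega)=-\i\partial\bar\partial\Tr_\omega\delta\omega$, applying $\mathcal{L}_V$ to $\Ric(\omega_i)=\sum_j\omega_j$ produces the coupled elliptic system
\[
\Delta_{\omega_i}u_i+\sum_j u_j=c_i,\qquad i=1,\ldots,k,
\]
and the analogous system for the $v_i$. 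Combined with the Bochner formula $\int_X|\bar\nabla^{\omega_i}\bar\nabla^{\omega_i}f_i|^2\,\omega_i^n=\int|\Delta_{\omega_i}f_i|^2\,\omega_i^n-\int\Ric(\omega_i)(\partial f_i,\bar\partial\bar f_i)\,\omega_i^n$ and the CKE identity $\Ric(\omega_i)=\sum_j\omega_j$, an $L^2$ pairing summed over $i$ splits the holomorphy condition on $V$ into independent holomorphy conditions for the real and imaginary parts. This produces real holomorphic vector fields $V_K,V_H$ with $V=V_K+JV_H$ and $\Re f_i(V_K)=\Re f_i(V_H)=0$ for every $i$. By \eqref{isometry} they lie in $\fk:=\Lie(K)$, so $\fg=\fk\oplus J\fk$, i.e., $G=K^\C$, and reductivity is immediate.

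The step I expect to be the main obstacle is the Bochner argument in the second part: because no individual $\omega_i$ is K\"ahler--Einstein, the classical single-metric proof does not apply to any one component. The cross terms $\int\Re(\overline{\bar\nabla\bar\nabla u_i}\cdot\bar\nabla\bar\nabla v_i)\,\omega_i^n$, which cancel automatically in the KE case, must here be shown to cancel only on summation over $i$, using the symmetry of $\Ric(\omega_i)=\sum_j\omega_j$ across all indices. Once the separation of real and imaginary parts is established, the $i$-independence of the resulting gradient fields --- necessary for them to define global vector fields on $X$ rather than an $i$-labeled collection --- is forced by \eqref{isometry}.
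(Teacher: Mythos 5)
Your treatment of \eqref{isometry} is essentially the paper's argument: both use $g^*\omega_1=\omega_1$ together with the CKE equation to conclude $g^*\bigl(\sum_j\omega_j\bigr)=\sum_j\omega_j$, hence $\Ric(g^*\omega_j)=\Ric(\omega_j)$, and then invoke Calabi--Yau uniqueness of the K\"ahler form in a fixed class with prescribed Ricci curvature. You are slightly more careful than the paper in first restricting to the identity component so that $g^*\omega_j\in\Omega_j$ is automatic; the paper applies the uniqueness directly for all $g\in\Iso(X,\omega_1)$ and leaves that cohomological point implicit. Your closing remark that the non-identity components ``still preserve each class $\Omega_j$'' would itself need an argument (an isometry could in principle permute repeated classes in the decomposition), but this is a defect the paper shares and it does not affect the Lie-algebra level statement actually used.

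For $G=K^\C$ you take a genuinely different route from the paper, and it has a gap you correctly anticipate but do not close. The paper does \emph{not} run a Bochner computation; it invokes the Bando--Mabuchi-style uniqueness theorem for CKE metrics \cite[Theorem 1.5]{HN17}: for $g\in G$, $g^*\bomega$ is again CKE, so by uniqueness $g^*\bomega=f_1^*\bomega$ for the time-one map of a one-parameter group $f_t$ generated by a holomorphic vector field $V$; one then checks $JV$ is Killing for every $\omega_i$, so $f_1\in K^\C$ and hence $g\in K^\C$. Your coupled Matsushima/Bochner scheme instead requires showing that the curvature term $\int_X\Ric(\omega_i)(\partial u_i,\bar\partial\bar u_i)\,\omega_i^n=\sum_j\int_X\omega_j(\partial u_i,\bar\partial\bar u_i)\,\omega_i^n$ combines with the linearized equation $\Delta_{\omega_i}u_i+\sum_j u_j=c_i$ after summing over $i$; but $\omega_j$ contracts the derivative indices of $u_i$ while $\omega_i^n$ is the measure and $\Delta_{\omega_i}$ acts on $u_i$ rather than $u_j$, so no single metric closes up the integration by parts, and the asserted cancellation ``only on summation over $i$'' is not demonstrated. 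A secondary unresolved step: even granting $\bar\nabla_i\bar\nabla_i u_i=0$ for each $i$, the holomorphic fields $\mathrm{grad}^{1,0}_{\omega_i}u_i$ are a priori $i$-dependent, and \eqref{isometry} by itself does not force them to coincide. As written the proposal does not establish $\fg=\fk\oplus J\fk$; the uniqueness route in the paper is what makes the lemma go through cleanly.
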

\begin{proof}
For any $g \in \Iso(X,\omega_1)$. we have
\[
\omega_1+g^\ast \bigg( \sum_{i>1} \omega_i \bigg)=\Ric(g^\ast \omega_1)=\Ric(\omega_1)=\omega_1+\sum_{i>1} \omega_i.
\]
This shows that $g^\ast \bigg( \sum_{i>1} \omega_i \bigg)=\sum_{i>1} \omega_i $. Thus
\[
\Ric(g^\ast \omega_2)=g^\ast \omega_1+g^\ast \bigg( \sum_{i>1} \omega_i \bigg)=\sum_i \omega_i=\Ric(\omega_2).
\]
From the uniqueness of Calabi-Yau solution \cite{Yau78}, we know that $g^\ast \omega_2=\omega_2$ and hence $g \in \Iso(X,\omega_2)$. Similarly, we can show that all $\Iso(X,\omega_i)$'s coincide.

For any $g \in G$, $g^\ast \bomega$ is also CKE. By \cite[Theorem 1.5]{HN17} and the proof therein, there exists a real holomorphic vector field $V$ which generates a one-parameter subgroup $f_t$ such that $g^\ast \bomega=f_1^\ast \bomega$. We set $f_t^\ast \bomega=\bomega_0+\dd \bphi(t)$ for $\bphi \colon [0,1] \to \prod_i \cH_i$. Since $H^{0,1}(X)=0$, we have $i_V \bomega=\sqrt{-1} \bp \dot{\bphi}(t)$. This shows that $JV$ is $\omega_i$-Killing for all $i=1,\ldots,k$. Hence $f_t \in K^\C$. On the other hand, since $f_1 \circ g \in K$, we obtain $g=f_1^{-1} \circ f_1  \circ g \in K^\C$ as desired.
\end{proof}
\subsection{Twisted K\"ahler-Einstein metrics}
In this subsection, let $X$ be a compact K\"ahler manifold and $\a_0$ a K\"ahler form with $\lambda c_1(X)-[\a_0]>0$. We take a reference K\"ahler metric $\omega_0 \in \lambda c_1(X)-[\a_0]$ and set
\[
\cH_{\omega_0}:=\{\phi \in \PSH(X,\omega_0)|\omega_\phi:=\omega_0+\dd \phi>0\},
\]
\[
\cH_{\a_0}:=\{\psi \in \PSH(X,\a_0)|\a_\psi:=\a_0+\dd \psi>0\}.
\]
\begin{lem} \label{TKE}
Assume that there exists a pair $(\phi,\psi) \in \cH_{\omega_0} \times \cH_{\a_0}$ satisfying the $\a_\psi$-twisted K\"ahler-Einstein equation:
\begin{equation} \label{TKE}
\Ric(\omega_\phi)=\lambda(\omega_\phi+\a_\psi).
\end{equation}
Then for any $\tilde{\psi} \in \cH_{\a_0}$, there exists a unique $\a_{\tilde \psi}$-TKE potential $\tilde{\phi} \in \cH_{\omega_0}$ up to additive constants. In particular, the twisted inverse Ricci operator
\[
(\Ric-\lambda \Id)^{-1} \colon \cH_{\a_0} \cap \AM^{-1}(0) \to \cH_{\omega_0} \cap \AM^{-1}(0)
\]
given by $\tilde{\psi} \mapsto \tilde{\phi}$ is well-defined.
\end{lem}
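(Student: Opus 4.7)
The TKE equation $\Ric(\omega_{\tilde\phi})=\lambda(\omega_{\tilde\phi}+\a_{\tilde\psi})$ is equivalent to a scalar complex Monge-Amp\`ere equation. Since $\Ric(\omega_0)-\lambda\omega_0-\lambda\a_0$ represents the trivial class $2\pi c_1(X)-\lambda[\omega_0]-\lambda[\a_0]=0$, the $\p\bp$-lemma produces a smooth real-valued $h_0$ with $\Ric(\omega_0)-\lambda\omega_0-\lambda\a_0=\dd h_0$; fix such an $h_0$ once and for all (up to an additive constant to be absorbed later). The TKE equation then becomes, modulo the additive constant ambiguity in $\tilde\phi$,
\[
(\omega_0+\dd\tilde\phi)^n = e^{h_0-\lambda\tilde\phi-\lambda\tilde\psi+c}\,\omega_0^n,
\]
with $c=c(\tilde\phi,\tilde\psi)\in\R$ determined by the total-mass requirement $\int_X(\omega_0+\dd\tilde\phi)^n=[\omega_0]^n$.

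For $\lambda=-1$ this is a classical Aubin-Yau equation (the exponent is strictly increasing in $\tilde\phi$), so a unique smooth solution $\tilde\phi\in\cH_{\omega_0}$ exists unconditionally and the hypothesis of the lemma is not even needed in this case.

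For $\lambda=+1$ I would run the continuity method along the segment $\psi_t:=(1-t)\psi+t\tilde\psi$, $t\in[0,1]$, which stays in $\cH_{\a_0}$ because $\a_{\psi_t}=(1-t)\a_\psi+t\a_{\tilde\psi}$ is a convex combination of K\"ahler forms and hence satisfies a uniform K\"ahler lower bound $\a_{\psi_t}\geq\e\omega$ on $[0,1]$. Let $T:=\{t\in[0,1]\mid\text{there exists } \tilde\phi_t\in\cH_{\omega_0}\text{ solving the $\a_{\psi_t}$-TKE equation}\}$. By hypothesis $0\in T$. Openness is the implicit function theorem: the linearization at a solution $\tilde\phi_t$ is $-\Delta_{\omega_{\tilde\phi_t}}-\Id$ on mean-zero H\"older functions, and Bochner-Lichnerowicz applied to $\Ric(\omega_{\tilde\phi_t})=\omega_{\tilde\phi_t}+\a_{\psi_t}\geq(1+\e_t)\omega_{\tilde\phi_t}$ gives $\lambda_1(-\Delta_{\omega_{\tilde\phi_t}})>1$, so the linearization is an isomorphism. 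Closedness reduces, via Yau's $C^2$-estimate and Evans-Krylov/Schauder bootstrap, to a uniform $C^0$-bound on $\{\tilde\phi_t\}_{t\in T}$.

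The $C^0$-estimate is the main obstacle: I would obtain it from the uniform strict positivity of the twist along the family, either by Tian's $\alpha$-invariant applied to the right-hand side, or, closer in spirit to the paper, by verifying uniform coercivity in $t$ of the twisted Ding functional $\tilde\phi\mapsto-\AM(\tilde\phi)-\log\int_Xe^{-\tilde\phi-\psi_t+h_0}\omega_0^n$ using Berndtsson's convexity theorem together with the strict positivity of $\a_{\psi_t}$. Uniqueness of $\tilde\phi$ up to an additive constant is then immediate: join two solutions by the $C^{1,1}$ Mabuchi geodesic in $\cH_{\omega_0}$ and apply Berndtsson's convexity theorem to the same twisted Ding functional, whose strict convexity along the geodesic (guaranteed by $\a_{\tilde\psi}>0$) forces its two endpoints to differ by a constant. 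The normalization to $\AM^{-1}(0)$ then pins down a unique element of $\cH_{\omega_0}\cap\AM^{-1}(0)$, giving the well-definedness of $(\Ric-\lambda\Id)^{-1}$.
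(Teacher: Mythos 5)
For $\lambda=-1$ and for uniqueness your approach matches the paper's (both reduce to Aubin's theorem, and both deduce uniqueness from Berndtsson's convexity plus the strict positivity of the twist). For existence when $\lambda=1$ you take a genuinely different route: the paper keeps the twist $\a_{\tilde\psi}$ fixed and runs the Aubin-type continuity path $\Ric(\omega_{\phi_t})-\a_{\tilde\psi}=(1-t)\omega_0+t\omega_{\phi_t}$, then invokes Sz\'ekelyhidi's observation that solvability along this path depends only on the classes $([\omega_0],[\a_0])$, so the hypothesized solution for $\a_\psi$ immediately yields solvability at $t=1$ for any other $\a_{\tilde\psi}$ in the same class. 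You instead deform the twist itself, from $\a_\psi$ to $\a_{\tilde\psi}$, holding the equation in fully twisted form.

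Your $C^0$-estimate, as stated, has a gap. Strict positivity $\a_{\psi_t}\geq\e\omega_0$ together with Berndtsson's convexity does \emph{not} give coercivity of the twisted Ding functional: strict convexity along geodesics is not the same thing as a linear lower bound, and if positivity of the twist alone implied coercivity, every positively twisted K\"ahler--Einstein equation on any Fano would be solvable, which is false (a Fano with $R(X)<1$ admits no solution on the Aubin path for $t>R(X)$ even though the twist $(1-t)\omega_0$ there is strictly positive). Tian's $\alpha$-invariant criterion is likewise only a sufficient condition and need not hold. To close the gap you must actually use the hypothesis of the lemma: the assumed solution for $\a_\psi$ gives, via the Darvas--Rubinstein equivalence between existence of minimizers and coercivity (the strictly positive twist kills the automorphism group, so the quotient is trivial), that $D_{\a_\psi}$ is coercive; then note that $\lvert D_{\a_{\psi_t}}(\phi)-D_{\a_\psi}(\phi)\rvert\leq\lVert\tilde\psi-\psi\rVert_{C^0}$ uniformly in $\phi$, since the two differ only in replacing $e^{-\psi}$ by $e^{-\psi_t}$ inside the $L$-term; this gives coercivity of $D_{\a_{\psi_t}}$ with constants uniform in $t$, hence a $J$-bound on $\tilde\phi_t$, and finally a $C^0$-bound via a Green-function argument using $\Ric(\omega_{\tilde\phi_t})\geq\omega_{\tilde\phi_t}$. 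With that repair your route works, but it is heavier than the paper's one-line appeal to \cite{Sze11}, since it imports the Darvas--Rubinstein machinery that the paper only develops (in the coupled setting) later in Section 2.1.
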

\begin{proof}
In the $\lambda=-1$ case, one can show the existence as well as the uniqueness with a slight modification to the original arguments \cite{Aub76}, so we omit the proof. In the $\lambda=1$ case, the uniqueness follows from \cite[Theorem 7.1]{Ber15} since the strict positivity of $\a_\psi$ yields that any holomorphic vector field $V$ satisfying $i_V \a_\psi=0$ must be zero. As for the existence, one can show using continuity method along the path $\{\phi_t\} \subset \cH_{\omega_0}$:
\begin{equation} \label{cmethod}
\Ric(\omega_{\phi_t})-\a_{\tilde{\psi}}=(1-t)\omega_0+t\omega_{\phi_t}, \quad t \in [0,1].
\end{equation}
We can solve \eqref{cmethod} for $t=0$ due to the Calabi-Yau theorem \cite{Yau78}, while for $t \in (0,1]$, the solvability of \eqref{cmethod} depends only on the classes $([\omega_0],[\a_0])$, which was observed in \cite{Sze11} when $\a_{\tilde{\psi}}=0$, but the proof carries over essentially verbatim to the general case.
\end{proof}
\begin{rem}
For any $\psi \in \cH_{\a_0}$, $\a_\psi$-TKE metrics are strictly unique, whereas the CKE metrics are not when $G$ is non-trivial. Even in this case, the above lemma assures that the coupled Ricci iteration \eqref{CRI} is well-defined.
\end{rem}
\section{Coupled Ricci iteration} \label{CIB}
\subsection{General case}
Let $X$ be a compact K\"ahler manifold and $(\Omega_i)$ a decomposition of $2 \pi \lambda c_1(X)$. In what follows, we always assume that the Ding functional ${\bf D}$ is ${\bf J}$-coercive (in particular, which forces $G$ to be trivial). We note that in the $\lambda=-1$ case, the coercivity of ${\bf D}$ was essentially shown in \cite{HN17}.
We start with the following lemma:
\begin{lem} \label{continuity of the iteration map}
The iteration map $\cI \colon \prod_i \tilde{\cH}_i \to \prod_i \tilde{\cH}_i$ admits a unique ${\bf d}_1$-continuous extension $\cI \colon \prod_i(\cE^1_i \cap \AM^{-1}_i(0)) \to \prod_i (\cE^1_i \cap \AM^{-1}_i(0))$.
\end{lem}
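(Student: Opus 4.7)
The plan is to extend $\cI$ row-by-row. The $i$-th coordinate of $\cI$ is the composition of a linear assembly map---sending the already-processed potentials $\phi_{\ell+1,1},\ldots,\phi_{\ell+1,i-1}$ and the untouched inputs $\phi_{\ell,i+1},\ldots,\phi_{\ell,k}$ to the twisting K\"ahler form $\a:=\sum_{j<i}\o_{\phi_{\ell+1,j}}+\sum_{j>i}\o_{\phi_{\ell,j}}$---followed by the twisted inverse Ricci operator $(\Ric-\la\Id)^{-1}$ acting into the class $\Omega_i$. It therefore suffices to show, for each fixed $i$, that this single-row operator extends ${\bf d}_1$-continuously to finite-energy input and lands in $\cE^1_i\cap\AM_i^{-1}(0)$; the full map $\cI$ will then be the composition of these extensions.

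First I would recast one row as a complex Monge-Amp\`ere equation. Writing $\Psi$ for the sum of the input potentials (an element of $\cE^1(\sum_{j\neq i}\o_{0,j})$) and using the Calabi-Yau reference form $\theta_0$, the twisted KE equation $\Ric(\o_{\phi_i})=\la(\o_{\phi_i}+\a_\Psi)$ is equivalent to
\[
\o_{\phi_i}^n = c\,e^{-\la\phi_i - \la\Psi}\,\theta_0^n,
\]
with a normalization constant $c$ determined by the total mass $[\Omega_i]^n$. For $\Psi\in\cE^1$ the right-hand side defines a Radon measure with suitable integrability to which the variational theory of \cite{BBEGZ11,BBGZ13} applies: the unique $\AM_i$-normalized minimizer of the associated single-row Ding functional is the desired finite-energy extension of $(\Ric-\la\Id)^{-1}$ at level $\Psi$.

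Continuity in ${\bf d}_1$ is the crucial point. Given inputs converging in ${\bf d}_1$, the accumulated twists $\Psi_m$ converge in $d_1$, hence in $L^1$ with uniformly bounded suprema---exactly the hypothesis that drives the argument in (P2) of the proof of Theorem \ref{Coercivity}. Skoda's uniform integrability theorem \cite{Zer01} then yields $\|e^{-\la\Psi_m}\|_{L^p(\theta_0^n)}\leq C$ for some $p>1$ together with $L^1$-convergence $e^{-\la\Psi_m}\to e^{-\la\Psi}$, by the same H\"older-plus-semi-continuity computation used inside (P2). From here the single-row Ding functionals converge uniformly on ${\bf d}_1$-bounded sets, and a compactness argument for their $\AM_i$-normalized minimizers, combined with the uniqueness statement in Lemma \ref{TKE}, forces the minimizers themselves to converge in $d_1$. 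This gives ${\bf d}_1$-continuity of each row and therefore of $\cI$; uniqueness of the extension is automatic from the ${\bf d}_1$-density of $\prod_i\tilde{\cH}_i$ in $\prod_i(\cE^1_i\cap\AM_i^{-1}(0))$ \cite[Lemma 5.2]{DR15}. The main obstacle I anticipate is the uniform $L^p$-control of $e^{-\la\Psi_m}$ in the positive case $\la=1$, where the exponent is destabilizing: this is precisely what forces the argument through Skoda-type pluripotential estimates rather than through classical a priori bounds.
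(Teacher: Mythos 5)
Your row-by-row decomposition, the recasting of each row as a complex Monge--Amp\`ere equation with right-hand side $e^{-\la\phi_i-\la\Psi}\theta_0^n$, and the use of Skoda--H\"older estimates to propagate $L^1/{\bf d}_1$-convergence of the inputs into convergence of the associated energies --- this all matches the paper's strategy. The paper likewise characterizes each row's output as the unique $\AM_i$-normalized minimizer of the Ding functional restricted to the slice where the other coordinates are frozen, and uses ${\bf J}$-coercivity (standing assumption in that section) to get ${\bf d}_1$-bounds on the minimizers, lower semi-continuity of ${\bf D}$ for the comparison, and upper semi-continuity of $\AM_i$ plus \cite[Lemma 5.1 (i)]{DR15} to upgrade $L^1$-convergence to $d_{1,i}$-convergence.

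There is, however, a genuine gap in how you invoke uniqueness. You appeal to ``the uniqueness statement in Lemma \ref{TKE}'' to conclude that the limit of the minimizers must equal the minimizer for the limiting slice. But Lemma \ref{TKE} is about \emph{smooth} twisted K\"ahler--Einstein potentials, whereas the candidate minimizer here is a priori only in $\cE^1_i\cap\AM_i^{-1}(0)$ (the twist $\a_\Psi$ is a finite-energy current, not a K\"ahler form). In the positive case $\la=1$ the paper has to work to establish uniqueness of the weak minimizer: it first shows that Skoda's integrability theorem forces the right-hand side of the Monge--Amp\`ere equation to lie in $L^p$ for some $p>1$, then Ko{\l}odziej's estimate upgrades any $\cE^1$-solution to a bounded one, and finally the bounded geodesic argument of Berndtsson \cite[Theorem 7.1]{Ber15}, together with triviality of $G$ (enforced by ${\bf J}$-coercivity), forces two minimizers to coincide. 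Without this chain the uniqueness step is not established, and your ``compactness plus uniqueness'' conclusion does not close. Your claim of \emph{uniform} convergence of the slice Ding functionals on ${\bf d}_1$-bounded sets is stronger than what the paper proves; the paper only needs pointwise convergence of ${\bf L}$ for fixed $\varphi$ combined with lower semi-continuity of ${\bf D}$ along $L^1$-limits, which is a cleaner way to avoid justifying uniformity of the Skoda constants over the whole bounded set. Filling in the weak uniqueness argument (Skoda $\Rightarrow$ Ko{\l}odziej $\Rightarrow$ bounded geodesic) is the essential missing ingredient.
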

\begin{proof}
For $\bphi=(\phi_i) \in \prod_i \tilde{\cH}_i$, we set $\bpsi:=\cI(\bphi)=(\psi_i) \in \prod_i \tilde{\cH}_i$, \ie $\psi_i$ satisfies
\begin{eqnarray*}
\Ric(\omega_{\psi_i}) &=& \lambda \bigg( \sum_{j \leq i} \omega_{\psi_j}+\sum_{j>i} \omega_{\phi_j} \bigg)\\
&=& \lambda \bigg( \sum_j \omega_{0,j}+\dd \bigg( \sum_{j \leq i} \psi_j+\sum_{j>i} \phi_j \bigg) \bigg)\\
&=& \Ric(\theta_0)+\lambda \dd \bigg( \sum_{j \leq i} \psi_j+\sum_{j>i} \phi_j \bigg).
\end{eqnarray*}
Since $\Ric(\omega_{\psi_i})-\Ric(\theta_0)=-\dd \log(\omega_{\psi}^n/\theta_0^n)$, we find that $\bpsi$ satisfies the following coupled Monge-Amp\`ere equation
\begin{equation} \label{IMA}
\frac{\omega_{\psi_i}^n}{[\Omega_i]^n}=\frac{e^{-\lambda(\sum_{j \leq i} \psi_j+\sum_{j>i} \phi_j)} \theta_0^n}{\int_X e^{-\lambda(\sum_{j \leq i} \psi_j+\sum_{j>i} \phi_j)} \theta_0^n}.
\end{equation}
The equation \eqref{IMA} can be extended for all elements $\bphi, \bpsi \in \prod_i (\cE^1_i \cap \AM^{-1}_i(0))$ in a weak sense (as an equation for measures), and each solution is characterized as a unique minimizer of the Ding functional restricted to a slice of $\prod_i (\cE^1_i \cap \AM^{-1}_i(0))$:
\[
{\bf D}_i \colon \prod_{j<i} \{\psi_j\} \times (\cE^1_i \cap \AM^{-1}_i(0)) \times \prod_{j>i} \{ \phi_j \} \to \R, \quad i=1,\ldots,k.
\]
Indeed, we know that each ${\bf D}_i$ is $J_i$-coercive since ${\bf D}$ is ${\bf J}$-coercive.  So we can construct a minimizer $\psi_i \in \cE^1_i \cap \AM^{-1}(0)$ as a limit point of any minimizing sequence of ${\bf D}_i$ since ${\bf D}_i$ is lower semi-continuous and sublevel sets of $J_i$ are weakly compact (\cf \cite[Lemma 3.3]{BBGZ13}). As for the uniqueness in the Fano case, we first see that the RHS of \eqref{IMA} is in $L^p$ for $p>1$ by Skoda's integrability theorem. Thus any solution $\psi_i$ to \eqref{IMA} is bounded due to the Ko\l odziej's $C^0$-estimate \cite{Kol03, Kol05} (or the Alexandroff-Bakelman-Pucci estimate \cite{Blo05}). Then any two solutions $\psi_i$, $\tilde{\psi}_i$ can be joined by a unique weak geodesic (or more pricisely, bounded geodesic) $\psi_i(t)$ with $\psi_i(0)=\psi_i$ and $\psi_i(1)=\tilde{\psi}_i$ constructed by the Peron-Bremermann type envelope (\cf \cite[Section 2.2]{Ber15}). Since $\psi_i$ and $\tilde{\psi_i}$ are a minimizer of ${\bf D}_i$ and ${\bf D}_i$ is convex along $\psi_i(t)$, ${\bf D}_i(\psi_i(t))$ should be affine in $t$. Moreover, since $e^{-(\sum_{j<i} \psi_j+\sum_{j>i} \phi_j)}$ is integrable, we can apply \cite[Theorem 7.1]{Ber15} to know that $\psi_i(t)=g_t.\psi_i$ for some one-parameter subgroup $g_t$ generated by a holomorphic vector field, which should be identity from the assumption that $G$ is trivial.

Then one can prove in the same way as the proof of Theorem \ref{Coercivity} (P2). Let us consider a ${\bf d}_1$-convergent sequence $\bphi_m=(\phi_{m,i}) \to \bphi_{\infty}:=\bphi$ in $\prod_i( \cE^1_i \cap \AM^{-1}_i(0))$ as $m \to \infty$. Then each $\bphi_m$ defines a slice of $\prod_i (\cE^1_i \cap \AM^{-1}_i(0))$:
\[
L_m:=\big\{(\varphi,\phi_{m,2},\ldots,\phi_{m,k}) \in\prod_i( \cE^1_i \cap \AM^{-1}_i(0)) \big| \varphi \in \cE^1_1 \cap \AM^{-1}_1(0)\big\} \simeq \cE^1_1 \cap \AM^{-1}_1(0).
\]
Let $\varphi_m$ (resp. $\psi_1$) be a unique minimizer of ${\bf D}_{| L_m}$ (resp. ${\bf D}_{| L_\infty}$) on $\cE^1_1 \cap \AM^{-1}_1(0)$. So we have
\[
{\bf D}(\varphi_m, \phi_{m,2},\ldots,\phi_{m,k}) \leq {\bf D}(\varphi, \phi_{m,2},\ldots,\phi_{m,k})
\]
for all $\varphi \in \cE^1_1$. In particular, we know that ${\bf D}(\varphi_m, \phi_{m,2},\ldots,\phi_{m,k}) \leq C$, and hence $J_1(\varphi_m) \leq C$ since ${\bf D}$ is ${\bf J}$-coercive (in fact, the coercivity estimate holds on the ${\bf d}_1$-metric completion $\prod_i (\cE^1_i \cap \AM^{-1}_i(0))$ by density \cite[Remark 3.9 (ii)]{DR15}). Thus we also have $d_{1,1}(0,\varphi_m) \leq C$ by \eqref{Jde}, and $|\sup_X \varphi_m| \leq C$. So by taking a subsequence, we obtain $\varphi_m \to \varphi_\infty \in \cE^1_1$ in $L^1$. The lower semi-continuity of ${\bf D}$ yields that
\begin{eqnarray} \label{mcm}
{\bf D}(\varphi_\infty,\phi_2,\ldots,\phi_k) &\leq& \liminf_{m \to \infty} {\bf D}(\varphi_m,\phi_{m,2},\ldots,\phi_{m,k}) \nonumber \\
&\leq& \liminf_{m \to \infty}{\bf D}(\varphi,\phi_{m,2},\ldots,\phi_{m,k}) \nonumber \\
&=& {\bf D}(\varphi,\phi_2,\ldots,\phi_k)
\end{eqnarray}
for all $\varphi \in \cE^1_1$. Put $\varphi=\varphi_\infty$, then we find that all of the above inequalities must be equalities. In particular,
\[
\limsup_{m \to \infty} \AM_1(\varphi_m)=\AM_1(\varphi_\infty).
\]
By passing to a subsequence, we get $\lim_{m \to \infty} \AM_1(\varphi_m)=\AM_1(\varphi_\infty)$, $d_{1,1}(\varphi_m,\varphi_\infty) \to 0$ and hence $\AM_1(\varphi_\infty)=0$. Next we put $\varphi=\psi_1$ in \eqref{mcm} to know that $\varphi_\infty=\psi_1$ since $\psi_1$ is a unique minimizer of ${\bf D}_{L_\infty}$ with $\AM_1(\psi_1)=0$, which also shows that convergence $d_{1,1}(\varphi_m,\psi_1) \to 0$ holds without taking subsequences. Thus we find that the unique minimizer $\psi_1$ of ${\bf D}_1$ depends ${\bf d}_1$-continuously on $\bphi$. 

Similarly, we can check that the unique minimizer $\psi_2$ of ${\bf D}_2$ depends ${\bf d}_1$-continuously on $\bphi$. Repeating this argument, we find that the $k$-tuple $\bpsi=\cI(\bphi)$  depends ${\bf d}_1$-continuously on $\bphi$.
\end{proof}
\begin{rem}
For the later purpose, it is enough to show the continuity of the map $\cI$ with respect to the $C^{r,\b}$-topology ($\b \in (0,1)$), which easily follows from the Implicit Function Theorem. However, we believe Lemma \ref{continuity of the iteration map} is of independent interest. Indeed, Lemma \ref{continuity of the iteration map} (together with Proposition \ref{gpi}) suggests that one can extend \eqref{CRI} to a system defined on the larger space $\prod_i (\cE_i^1 \cap \AM^{-1}_i(0))$, and ask if this possibly singular system converges in the ${\bf d}_1$-topology. For the time $1$ Ricci iteration, such kind of extension was studied in \cite[Theorem 6.4]{BBEGZ11}.
\end{rem}
Next we prove the following proposition, which plays a key role to prove convergence of the coupled Ricci iteration:
\begin{prop} \label{gpi}
The following statements hold:
\begin{enumerate}
\item (Monotonicity) The Ding functional ${\bf D} \colon \prod_i(\cE^1_i \cap \AM^{-1}_i(0))  \to \R$ is decreasing along the coupled Ricci iteration, \ie we have
\begin{equation} \label{MAI}
{\bf D}(\cI(\bphi)) \leq {\bf D}(\bphi)
\end{equation}
for all $\bphi \in \prod_i(\cE^1_i \cap \AM^{-1}_i(0))$.
\item (Strict monotonicity) The equality in \eqref{MAI} holds if and only if $\bphi$ is CKE.
\end{enumerate}
\end{prop}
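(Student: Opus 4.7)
The plan is to exploit the variational characterization of each coordinate of $\cI(\bphi)$ established in the proof of Lemma~\ref{continuity of the iteration map}. Writing $\bpsi = \cI(\bphi) = (\psi_1,\ldots,\psi_k)$, the coupled Monge-Amp\`ere equation \eqref{IMA} shows that $\psi_i$ is the unique minimizer on $\cE^1_i \cap \AM^{-1}_i(0)$ of the one-variable restriction
\[
{\bf D}_i(\varphi) := {\bf D}\bigl(\psi_1,\ldots,\psi_{i-1},\,\varphi,\,\phi_{i+1},\ldots,\phi_k\bigr).
\]
Uniqueness in this class was verified inside Lemma~\ref{continuity of the iteration map} under the standing ${\bf J}$-coercivity hypothesis (which forces $G$ to be trivial in the Fano case), and in the $\lambda=-1$ case by a direct Aubin-type argument.

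For (1), I would proceed by a telescoping argument. Set $\bphi^{(0)} := \bphi$ and
\[
\bphi^{(i)} := (\psi_1,\ldots,\psi_i,\phi_{i+1},\ldots,\phi_k), \qquad i=1,\ldots,k,
\]
so that $\bphi^{(k)} = \cI(\bphi)$. Between $\bphi^{(i-1)}$ and $\bphi^{(i)}$ only the $i$-th entry changes, from $\phi_i$ to $\psi_i$, and by construction $\psi_i$ is the minimizer of ${\bf D}_i$ with the shared entries $(\psi_1,\ldots,\psi_{i-1},\phi_{i+1},\ldots,\phi_k)$, while $\phi_i$ is itself an admissible competitor in $\cE^1_i \cap \AM^{-1}_i(0)$. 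Therefore ${\bf D}(\bphi^{(i)}) \leq {\bf D}(\bphi^{(i-1)})$ for each $i$, and chaining the $k$ inequalities delivers \eqref{MAI}.

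For (2), suppose equality holds in \eqref{MAI}. Then every inequality in the telescoping chain above must be an equality, so by the uniqueness of the minimizer of each ${\bf D}_i$ in $\cE^1_i \cap \AM^{-1}_i(0)$, we must have $\phi_i = \psi_i$ for every $i$; that is, $\bphi$ is a fixed point of $\cI$. Unwinding the defining equations \eqref{CRI} at such a fixed point yields precisely the CKE system \eqref{CKE} for $\bphi$. Conversely, any CKE $\bphi$ is trivially a fixed point of $\cI$, and then equality holds in \eqref{MAI}. The only non-formal ingredient the argument uses is the uniqueness of each ${\bf D}_i$-minimizer, which is precisely what was secured in Lemma~\ref{continuity of the iteration map} (via Berndtsson's convexity together with triviality of $G$ in the Fano case, and via the standard Aubin-Yau uniqueness when $\lambda=-1$); I do not anticipate any further obstacle.
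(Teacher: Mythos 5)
Your telescoping argument for (1) and the chain-of-equalities argument in (2) coincide exactly with the paper's proof: the paper likewise updates one coordinate at a time, using that each $\psi_i$ is the unique minimizer of the restricted functional ${\bf D}_i$ established in Lemma~\ref{continuity of the iteration map}, and then forces all inequalities to equalities and invokes uniqueness of the ${\bf D}_i$-minimizers to get $\bphi=\bpsi$.

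There is, however, a genuine gap at the very last step of (2). You conclude that $\bphi$ is a fixed point of $\cI$ and then claim that ``unwinding the defining equations \eqref{CRI}'' yields the CKE system \eqref{CKE}. But the proposition is stated on $\prod_i(\cE^1_i \cap \AM^{-1}_i(0))$, so a priori $\bphi$ is only a tuple of finite-energy potentials, and being a fixed point only gives the coupled Monge-Amp\`ere equations
\[
\frac{\omega_{\phi_1}^n}{[\Omega_1]^n}=\cdots=\frac{\omega_{\phi_k}^n}{[\Omega_k]^n}=\frac{e^{-\lambda\sum_i\phi_i}\theta_0^n}{\int_X e^{-\lambda\sum_i\phi_i}\theta_0^n}
\]
in the weak (measure-theoretic) sense. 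The CKE equation \eqref{CKE} -- and indeed the very notation $\Ric(\omega_i)$ appearing in \eqref{CRI} -- presupposes smoothness. To pass from the weak Monge-Amp\`ere identity to a bona fide smooth CKE metric you must invoke a regularity theorem; the paper does this by citing \cite[Theorem~2.9]{HN17}. Without that step the argument shows only that $\bphi$ is a weak solution, not that it is CKE in the sense of the statement.
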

\begin{proof}
(1) We carry over the same notations from the proof of Lemma \ref{continuity of the iteration map}. Since $\psi_1$ is the minimizer of ${\bf D}_1$, we have ${\bf D}(\psi_1,\phi_2,\ldots,\phi_k) \leq {\bf D}(\bphi)$. By repeating this, we get
\begin{eqnarray} \label{imo}
{\bf D}(\bphi) &\geq& {\bf D}(\psi_1,\phi_2,\ldots,\phi_k) \nonumber \\
&\geq& \hdots \nonumber \\
&\geq& {\bf D}(\psi_1,\ldots,\psi_{k-1},\phi_k) \nonumber \\
&\geq& {\bf D}(\bpsi).
\end{eqnarray}

(2) The ``if part'' is trivial since CKE metrics are exactly fixed points of the iteration map $\cI$. Conversely, assume ${\bf D}(\bpsi) = {\bf D}(\bphi)$. Then all the inequalities in \eqref{imo} must be equalities. From the uniquness of the minimizer of ${\bf D}_i$ in each step, $\bpsi$ should be equal to $\bphi$, thus $\bphi$ satisfies
\[
\frac{\omega_{\phi_1}^n}{[\Omega_1]^n}=\ldots=\frac{\omega_{\phi_k}^n}{[\Omega_k]^n}=\frac{e^{-\lambda \sum_i \phi_i} \theta_0^n}{\int_X e^{-\lambda \sum_i \phi_i} \theta_0^n}
\]
in a weak sense. By applying regularity theorem for CKE metrics \cite[Theorem 2.9]{HN17}, we see that $\bphi$ is CKE in the classical sense.
\end{proof}
For any initial data $\bomega_0=(\omega_{0,i})$ with $\omega_{0,i} \in \Omega_i$, we consider the coupled Ricci iteration $\bomega_\ell=(\omega_{\ell,i})=\cI^\ell(\bomega_0)$. Define potential functions $\phi_{\ell,i}$ by
\[
\omega_{\ell,i}=\omega_{0,i}+\dd \phi_{\ell,i}.
\]
Normalizing by constants, the function $\phi_{\ell+1,i}$ satisfies the following sup-normalized coupled Monge-Amp\`ere equations:
\begin{equation} \label{NMA}
\frac{\omega_{\phi_{\ell+1,i}}^n}{[\Omega_i]^n}=\frac{e^{-\lambda(\sum_{j \leq i} \phi_{\ell+1,j}+\sum_{j>i} \phi_{\ell,j})} \theta_0^n}{\int_X e^{-\lambda(\sum_{j \leq i} \phi_{\ell+1,j}+\sum_{j>i} \phi_{\ell,j})} \theta_0^n}, \quad \sup_X \phi_{\ell,i}=0.
\end{equation}
\begin{lem} \label{HOE}
Let $\phi_{\ell,i}$ be the solution of \eqref{NMA} and assume  a uniform bound $\|\phi_{\ell,i}\|_{C^0} \leq C$. Then we have $\|\phi_{\ell,i}\|_{C^r} \leq C$ for all $r \geq 0$. Moreover, we have the uniform equivalence of K\"ahler metrics
\begin{equation} \label{ueq}
\frac{1}{C} \omega_{0,i} \leq \omega_{\ell,i} \leq C \omega_{0,i}.
\end{equation}
\end{lem}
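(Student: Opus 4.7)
My plan is a standard bootstrap from the Monge-Amp\`ere equation \eqref{NMA}; the decisive step is to obtain a uniform Laplacian bound on $\phi_{\ell+1,i}$, after which both \eqref{ueq} and the higher-order estimates follow from well-known elliptic arguments. Write \eqref{NMA} in scalar form as
\[
(\omega_{0,i}+\dd\phi_{\ell+1,i})^n = e^{F_{\ell+1,i}}\omega_{0,i}^n,
\quad
F_{\ell+1,i} := -\lambda\bigg(\sum_{j\leq i}\phi_{\ell+1,j}+\sum_{j>i}\phi_{\ell,j}\bigg)+h_i-c_{\ell+1,i},
\]
with $h_i:=\log(\theta_0^n/\omega_{0,i}^n)$ smooth and $c_{\ell+1,i}$ the logarithm of the normalising integral. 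The hypothesis $\|\phi_{\ell,i}\|_{C^0}\leq C$ immediately yields $\|F_{\ell+1,i}\|_{C^0}\leq C$ uniformly in $\ell,i$.

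For the Laplacian bound, the two signs of $\lambda$ must be handled differently. When $\lambda=-1$,
\[
\Delta_{\omega_{0,i}}F_{\ell+1,i}=\sum_{j\leq i}\Delta_{\omega_{0,i}}\phi_{\ell+1,j}+\sum_{j>i}\Delta_{\omega_{0,i}}\phi_{\ell,j}+\Delta_{\omega_{0,i}}h_i,
\]
and each term satisfies $\Delta_{\omega_{0,i}}\phi_{m,j}=\mathrm{tr}_{\omega_{0,i}}\omega_{\phi_{m,j}}-\mathrm{tr}_{\omega_{0,i}}\omega_{0,j}\geq -C$ since the background metrics are mutually equivalent and $\omega_{\phi_{m,j}}>0$; thus $\Delta F\geq -C$ uniformly, and the classical Aubin-Yau inequality delivers the required bound on $n+\Delta_{\omega_{0,i}}\phi_{\ell+1,i}$. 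When $\lambda=+1$ the same computation only bounds $\Delta F$ from above, coupling the very Laplacians one is trying to estimate, so I bypass Aubin-Yau by exploiting that the iteration \emph{automatically} gives $\Ric(\omega_{\ell+1,i})=\sum_{j\leq i}\omega_{\ell+1,j}+\sum_{j>i}\omega_{\ell,j}\geq 0$. The Chern-Lu inequality applied to $\mathrm{id}\colon(X,\omega_{\ell+1,i})\to(X,\omega_{0,i})$, combined with the auxiliary function $\log\mathrm{tr}_{\omega_{\ell+1,i}}\omega_{0,i}-(B+1)\phi_{\ell+1,i}$ (where $B$ is an upper bisectional bound of $\omega_{0,i}$) and the maximum principle gives $\mathrm{tr}_{\omega_{\ell+1,i}}\omega_{0,i}\leq C$; the elementary inequality
\[
\mathrm{tr}_{\omega_{0,i}}\omega_{\ell+1,i}\leq n\cdot\frac{\omega_{\ell+1,i}^n}{\omega_{0,i}^n}\cdot(\mathrm{tr}_{\omega_{\ell+1,i}}\omega_{0,i})^{n-1}
\]
then converts this to the desired upper bound on $n+\Delta_{\omega_{0,i}}\phi_{\ell+1,i}$ using $\|F\|_{C^0}\leq C$.

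The equivalence \eqref{ueq} is now immediate: the upper bound $\omega_{\ell,i}\leq C\omega_{0,i}$ follows from the Laplacian bound since $\mathrm{tr}_{\omega_{0,i}}\omega_{\ell,i}=n+\Delta\phi_{\ell,i}$ controls all eigenvalues, and the lower bound then comes from the pinching $c^{-1}\leq \omega_{\ell,i}^n/\omega_{0,i}^n\leq c$ via eigenvalue comparison. With \eqref{ueq} in hand \eqref{NMA} is uniformly elliptic, so Evans-Krylov supplies a uniform $C^{2,\b}$ bound on $\phi_{\ell+1,i}$; this makes $F_{\ell+1,i}$ uniformly $C^{2,\b}$, and Schauder estimates applied to the linearisation promote $\phi_{\ell+1,i}$ to $C^{4,\b}$, after which iteration yields $C^r$ bounds for every $r$. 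The genuinely delicate point is the Fano-case Laplacian bound: the naive Aubin-Yau bootstrap is circular, and it is precisely the structural fact that the iteration produces a nonnegative Ricci form — allowing Chern-Lu to operate without any prior control on the other $\Delta\phi_{\ell,j}$ — that closes the argument.
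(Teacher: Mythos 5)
Your proof is correct, and in the negative case it follows essentially the same Aubin--Yau route as the paper: the only minor care needed is that the $j=i$ term in $\Delta_{0,i}F$ is $\Delta_{0,i}\phi_{\ell+1,i}=\Tr_{\omega_{0,i}}\omega_{\ell+1,i}-n$, which is only bounded below and not above, but a lower bound on $\Delta_{0,i}F$ is all the maximum principle requires, so this is fine (and in fact the paper exploits the positivity of this term explicitly). In the Fano case, however, you take a genuinely different route. The paper stays with Yau's estimate for $\log\Tr_{\omega_{0,i}}\omega_{\ell+1,i}$ and handles the wrong-sign term $-\Delta_{0,i}\Phi/\Tr_{\omega_{0,i}}\omega_{\ell+1,i}$ by observing that $\Phi:=\sum_{j\leq i}\phi_{\ell+1,j}+\sum_{j>i}\phi_{\ell,j}$ is $B\omega_{0,i}$-PSH for a uniform $B$; after a trace manipulation this bad term is absorbed by adding $\Phi$ to the test function, and the maximum principle is applied to $\log\Tr_{\omega_{0,i}}\omega_{\ell+1,i}+\Phi-K\phi_{\ell+1,i}$. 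You instead apply the Chern--Lu inequality directly to $\Tr_{\omega_{\ell+1,i}}\omega_{0,i}$, exploiting the structural fact that $\Ric(\omega_{\ell+1,i})=\sum_{j\leq i}\omega_{\ell+1,j}+\sum_{j>i}\omega_{\ell,j}>0$ is automatically nonnegative along the iteration, so the Ricci term has the right sign and no prior control on the other $\Delta\phi_{\ell,j}$ is needed; this is in the spirit of Rubinstein's original Ricci-iteration argument and sidesteps the auxiliary $\Phi$ entirely. Both arguments close cleanly: the paper's is self-contained within the Aubin--Yau framework and works uniformly in $\lambda$ after a sign flip, while yours makes the role of the positive Ricci lower bound --- which the paper invokes anyway in the subsequent $C^0$ lemma --- transparent at the $C^2$ level as well. (Two inessential constants: the factor $B+1$ in your test function should be taken slightly larger depending on the normalization of Chern--Lu, and the elementary trace inequality holds with the sharper $1/(n-1)!$ in place of $n$; neither affects the argument.)
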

\begin{proof}
\textbf{(Uniform Laplacian estimates)}

From the uniform bound $\|\phi_{\ell,i} \|_{C^0} \leq C$ and \eqref{NMA}, we obtain
\[
\frac{1}{C} \omega_{0,i}^n \leq \omega_{\ell+1,i}^n \leq C \omega_{0,i}^n.
\]
Combining with the fundamental inequality
\[
\Tr_{\omega_{0,i}} \omega_{\ell+1,i} \leq \frac{1}{(n-1)!}(\Tr_{\omega_{\ell+1,i}} \omega_{0,i})^{n-1} \frac{\omega_{\ell+1,i}^n}{\omega_{0,i}^n},
\]
we have
\begin{equation} \label{FIE}
\Tr_{\omega_{0,i}} \omega_{\ell+1,i} \leq C (\Tr_{\omega_{\ell+1,i}} \omega_{0,i})^{n-1}.
\end{equation}
A standard computation shows that
\[
\Delta_{\ell+1,i} \log \Tr_{\omega_{0,i}} \omega_{\ell+1,i} \geq \frac{1}{\Tr_{\omega_{0,i}} \omega_{\ell+1,i}} \bigg[ \Delta_{0,i} \log \frac{\omega_{\ell+1,i}^n}{\omega_{0,i}^n}-R_{0,i} \bigg]-C_1 \Tr_{\omega_{\ell+1,i}} \omega_{0,i},
\]
where $R_{0,i}$ is the scalar curvature of $\omega_{0,i}$, and $\Delta_{0,i}$ (resp. $\Delta_{\ell+1,i}$) is the $\bp$-Laplacian with respect to $\omega_{0,i}$ (resp. $\omega_{\ell+1,i}$). A constant $C_1$ only depends on a lower bound for the bisectional curvature of the reference metric $\omega_{0,i}$ (for instance, see \cite[Section 7]{Rub14}). Using the equation \eqref{NMA}, we obtain
\begin{eqnarray*}
\Delta_{\ell+1,i} \log \Tr_{\omega_{0,i}} \omega_{\ell+1,i} &\geq& \frac{1}{\Tr_{\omega_{0,i}} \omega_{\ell+1,i}} \bigg[ -\lambda \Delta_{0,i} \bigg(\sum_{j \leq i} \phi_{\ell+1,j}+\sum_{j>i} \phi_{\ell,j} \bigg)-C_2 \bigg] \\
&-& C_1 \Tr_{\omega_{\ell+1,i}} \omega_{0,i}.
\end{eqnarray*}
Now we deal with the two cases $\lambda=\pm 1$ separately.

\vspace{2mm}

(1) $\lambda=-1$ case. By using $\Tr_{\omega_{0,i}} \omega_{\ell+1,i}=n+\Delta_{0,i} \phi_{\ell+1,i}$, $n+\Delta_{0,i} \phi_{\ell+1,j}>0$ and $n+\Delta_{0,i} \phi_{\ell,j}>0$, the above can be reduced to
\[
\Delta_{\ell+1,i} \log \Tr_{\omega_{0,i}} \omega_{\ell+1,i} \geq 1-\frac{C_3}{\Tr_{\omega_{0,i}} \omega_{\ell+1,i}}-C_1 \Tr_{\omega_{\ell+1,i}} \omega_{0,i}.
\]
Thus subtracting $\Delta_{\ell+1,i} \phi_{\ell+1,i}=n-\Tr_{\omega_{\ell+1,i}} \omega_{0,i}$ from the both sides and using the Cauchy-Schwarz inequality
\[
n \leq \Tr_{\omega_{0,i}} \omega_{\ell+1,i} \cdot \Tr_{\omega_{\ell+1,i}} \omega_{0,i},
\]
we have
\[
\Delta_{\ell+1,i}(\log \Tr_{\omega_{0,i}} \omega_{\ell+1,i}-A \phi_{\ell+1,i}) \geq \bigg(A-C_1-\frac{C_3}{n} \bigg) \Tr_{\omega_{\ell+1,i}} \omega_{0,i}-An
\]
for a large constant $A>C_1+\frac{C_3}{n}$. Then the maximum principle shows that at the maximum point $\hat{x} \in X$ of the function $\log \Tr_{\omega_{0,i}} \omega_{\ell+1,i}-A \phi_{\ell+1,i}$, we have $\Tr_{\omega_{\ell+1,i}} \omega_{0,i} (\hat{x}) \leq C_4$. The uniform bound of $|\phi_{\ell+1,i}| \leq C$ together with  \eqref{FIE} implies that $\Tr_{\omega_{0,i}} \omega_{\ell+1,i} \leq C_5$ on $X$. By switching the role of $\omega_{0,i}$ and $\omega_{\ell+1,i}$ in \eqref{FIE}, we obtain also $\Tr_{\omega_{\ell+1,i}} \omega_{0,i} \leq C_6$.

\vspace{2mm}

(2) $\lambda=1$ case. Take a constant $B>0$ so that $B \omega_{0,i} \geq \sum_i \omega_{0,i}$, and put $\Phi:=\sum_{j \leq i} \phi_{\ell+1,j}+\sum_{j>i} \phi_{\ell,j}$ to simplify notations. Since $\Phi$ is $B \omega_{0,i}$-PSH, we observe that
\begin{eqnarray*}
0 &\leq& B \omega_{0,i}+\dd \Phi \\
&\leq& \Tr_{\omega_{\ell+1,i}} (B \omega_{0,i}+\dd \Phi) \cdot \omega_{\ell+1,i} \\
&=& (B \Tr_{\omega_{\ell+1,i}} \omega_{0,i}+\Delta_{\ell+1,i} \Phi) \cdot \omega_{\ell+1,i}.
\end{eqnarray*}
Taking the trace with respect to $\omega_{0,i}$ implies that
\[
\frac{\Delta_{0,i} \Phi}{\Tr_{\omega_{0,i}} \omega_{\ell+1,i}} \leq B \Tr_{\omega_{\ell+1,i}} \omega_{0,i}+\Delta_{\ell+1,i} \Phi-\frac{Bn}{\Tr_{\omega_{0,i}} \omega_{\ell+1,i}}.
\]
So by the Cauchy-Schwarz inequality, we can deduce that
\[
\Delta_{\ell+1,i} (\log \Tr_{\omega_{0,i}} \omega_{\ell+1,i}+\Phi) \geq -C_7 \Tr_{\omega_{\ell+1,i}} \omega_{0,i}.
\]
Subtracting $\Delta_{\ell+1,i} \phi_{\ell+1,i}=n-\Tr_{\omega_{\ell+1,i}} \omega_{0,i}$, we obtain
\[
\Delta_{\ell+1,i} (\log \Tr_{\omega_{0,i}} \omega_{\ell+1,i}+\Phi-K \phi_{\ell+1,i}) \geq (K-C_7) \Tr_{\omega_{\ell+1,i}} \omega_{0,i}-Kn.
\]
for a large constant $K>C_7$. Since $\Phi$ and $\phi_{\ell+1,i}$ are uniformly controlled from the assumption, we apply the maximum principle to conclude that $\Tr_{\omega_{0,i}} \omega_{\ell+1,i} \leq C_8$, and hence $\Tr_{\omega_{\ell+1,i}} \omega_{0,i} \leq C_9$.

\vspace{4mm}

\textbf{(Higher order estimates)}
The above estimates imply that the coupled Monge-Amp\`ere equations \eqref{NMA} are uniformly elliptic. We therefore apply the Evans-Krylov estimate (\cf \cite{Kry82, Wan12}) and obtain $\|\phi_{\ell,i}\|_{C^{2,\b}} \leq C$ for each $\b \in (0,1)$. Combining with the Schauder theory and a standard bootstrapping argument, we obtain the higher order estimates $\|\phi_{\ell,i}\|_{C^r} \leq C$ for all $r \geq 0$. This completes the proof.
\end{proof}
\subsection{For negative first Chern class}
Let $X$ be a compact K\"ahler manifold with $c_1(X)<0$. We start with the following a priori estimate:
\begin{lem} \label{unc}
In the $\lambda=-1$ case, we have a uniform bound $\|\phi_{\ell,i}\|_{C^0} \leq C$ for all $i=1,\ldots,k$.
\end{lem}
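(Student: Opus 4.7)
The plan is to exploit the sup-normalization $\sup_X \phi_{\ell,i}=0$ together with the sign $\lambda=-1$ to show that the right hand side of \eqref{NMA} is uniformly bounded in $L^\infty$; once this is in hand, the $C^0$-bound on $\phi_{\ell+1,i}$ follows from the standard Green function / Moser iteration argument (this is the Green function estimate alluded to in the footnote defining $\theta_0$).

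First I would use the $L^1$-compactness of sup-normalized $\omega_{0,i}$-PSH functions: there exists a constant $C_1>0$ depending only on the reference data such that $\int_X |\phi_{\ell,i}|\, \theta_0^n \leq C_1$ for all $\ell$ and $i$. Because $\lambda=-1$ every $\phi_{\ell,i}$ is non-positive, so the exponent
\[
\Phi_{\ell+1,i} := \sum_{j \leq i} \phi_{\ell+1,j} + \sum_{j>i} \phi_{\ell,j}
\]
satisfies $\Phi_{\ell+1,i} \leq 0$ and $\int_X \Phi_{\ell+1,i}\, \theta_0^n \geq -kC_1$. Applying Jensen's inequality to the probability measure $\theta_0^n/[\theta_0]^n$ then yields a uniform positive lower bound $\int_X e^{\Phi_{\ell+1,i}}\, \theta_0^n \geq c_0 >0$, independent of $\ell$ and $i$. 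Combining this with the trivial upper bound $e^{\Phi_{\ell+1,i}}\leq 1$ and writing the right hand side of \eqref{NMA} as $f_{\ell+1,i}\, \omega_{0,i}^n$, one obtains $\|f_{\ell+1,i}\|_{L^\infty} \leq C_2$ uniformly in $\ell$ and $i$.

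It then suffices to extract a uniform $C^0$-bound for the sup-normalized solution of a Monge-Amp\`ere equation whose right hand side is uniformly bounded in $L^\infty$. This is classical: one can either invoke Ko\l odziej's $L^\infty$-estimate, or, closer in spirit to the footnote, run Yau's Moser iteration against the Green function of $\theta_0$ (the choice of $\theta_0$ being convenient because its Ricci form pairs cleanly with $\sum_j \omega_{0,j}$, so that the constants in the Sobolev and Green function bounds depend only on the reference data). I do not foresee a serious obstacle: the argument uses only the sign $\lambda=-1$, the sup-normalization, and the $L^1$-compactness of PSH, and in particular bypasses both the coercivity of the Ding functional and the monotonicity of the iteration.
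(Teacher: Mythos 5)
Your argument is correct, and it is close in spirit to the paper's proof: both reduce the problem to showing that the Monge--Amp\`ere density in \eqref{NMA} is uniformly bounded in $L^\infty$ and then invoke Ko\l odziej's estimate to control the oscillation. The paper organizes this slightly differently. It abandons the sup-normalization in favour of a \emph{mass} normalization (setting $\psi_{\ell,i}=\phi_{\ell,i}+\text{const}$ so that the normalization constant in the denominator of \eqref{NMA} disappears and $\psi_{0,i}=0$), and then the task becomes to bound the exponent $\sum_{j\leq i}\psi_{\ell+1,j}+\sum_{j>i}\psi_{\ell,j}$ from \emph{above}; this is done by combining a Green function estimate (this quantity being $C\theta_0$-PSH) with Jensen's inequality. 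You instead keep the sup-normalization, so the exponent $\Phi_{\ell+1,i}$ is trivially $\leq 0$, and the work is shifted to bounding the normalization constant $\int_X e^{\Phi_{\ell+1,i}}\theta_0^n$ from \emph{below}, which you do via $L^1$-compactness of sup-normalized PSH functions plus Jensen. These are really dual packagings of the same estimate (the $L^1$-compactness you use is itself typically proved by the same Green function argument), and both end with Ko\l odziej. Your version is perhaps marginally more direct since it avoids the re-normalization step. Two small remarks: the non-positivity of $\phi_{\ell,i}$ is a consequence of the sup-normalization, not of $\lambda=-1$; what the sign $\lambda=-1$ buys is that the exponent $-\lambda\Phi_{\ell+1,i}=\Phi_{\ell+1,i}$ inherits this non-positivity (for $\lambda=1$ the exponent would be $\geq 0$ and potentially unbounded). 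Also, your final sentence notes that the argument bypasses the coercivity of ${\bf D}$ and the monotonicity of the iteration -- that is true, but the paper's proof of this lemma also does not use them, so it is not a distinguishing feature.
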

\begin{proof}
We first make a few adjustment to $\phi_{\ell,i}$. Define a function $\psi_{\ell,i}$ so that
\[
\omega_{\ell,i}=\omega_{0,i}+\dd \psi_{\ell,i},
\]
and
\begin{equation} \label{UNM}
\frac{\omega_{\psi_{\ell+1,i}}^n}{[\Omega_i]^n}=e^{\sum_{j \leq i} \psi_{\ell+1,j}+\sum_{j>i} \psi_{\ell,j}} \cdot \frac{\theta_0^n}{[\theta_0]^n}, \quad \psi_{0,1}=\ldots=\psi_{0,k}=0.
\end{equation}
By the uniquness of solutions (\cf Lemma \ref{TKE}), two functions $\phi_{\ell+1,i}$, $\psi_{\ell+1,i}$ differ by a constant. Since the function $\sum_{j \leq i} \psi_{\ell+1,j}+\sum_{j>i} \psi_{\ell,j}$ is $\sum_i \omega_{0,i}$-PSH and $C \theta_0 \geq \sum_i \omega_{0,i}$ for some $C>0$, this is also $C \theta_0$-PSH. Thus a Green function estimate together with Jensen's inequality implies that
\begin{eqnarray*}
\sum_{j \leq i} \psi_{\ell+1,j}+\sum_{j>i} \psi_{\ell,j} &\leq& \frac{1}{[\theta_0]^n} \int_X \bigg( \sum_{j \leq i} \psi_{\ell+1,j}+\sum_{j>i} \psi_{\ell,j} \bigg) \theta_0^n+C \\
&\leq& \log \int_X e^{\sum_{j \leq i} \psi_{\ell+1,j}+\sum_{j>i} \psi_{\ell,j}} \cdot \frac{\theta_0^n}{[\theta_0]^n}+C \\
&\leq& C,
\end{eqnarray*}
where we used \eqref{UNM} in the last inequality. We therefore apply Ko\l odziej's $C^0$-estimate \cite{Kol03, Kol05} to find that
\[
\osc_X \psi_{\ell+1,i} \leq C.
\]
Since $\phi_{\ell+1,i}$ is sup-normalized, we conclude that $\|\phi_{\ell+1,i}\|_{C^0} \leq C$.
\end{proof}
Now we are ready to prove Theorem \ref{NFC}.
\begin{proof}[Proof of Theorem \ref{NFC}]
Since the Ding functional ${\bf D}$ is bounded from below and decreasing along the iteration $\bphi_{\ell}=(\phi_{\ell,i})$ (\cf Proposition \ref{gpi} (1)), we have
\[
\lim_{\ell \to \infty} {\bf D}(\bphi_\ell)=:m
\]
without taking subsequences. On the other hand, the uniform estimates $\| \phi_{\ell, i} \|_{C^r} \leq C$ together with Ascoli-Arzel\`a theorem implies that by taking a subsequence, $\bphi_\ell$ converges to some $k$-tuple of functions $\bphi_{\infty}=(\phi_{\infty,i})$ in the $C^{\infty}$-topology. Moreover, the inequality \eqref{ueq} assures that $\bomega_\infty:=\bomega_0+\dd \bphi_{\infty}>0$. In order to show $\bomega_{\infty}$ is CKE, we consider the metric $\cI(\bomega_\infty)$, which, by continuity of $\cI$ (\cf Lemma \ref{continuity of the iteration map}), also arises as a limit point of the iteration $\bomega_{\ell}$. Thus
\[
{\bf D}(\bomega_{\infty})={\bf D}(\cI(\bomega_{\infty}))=m.
\]
So it follows from Proposition \ref{gpi} (2) that $\bomega_{\infty}$ is CKE. Since the CKE metric is unque, the $C^\infty$-convergence $\bomega_\ell \to \bomega_\infty$ in fact holds without taking a subsequence. This completes the proof.
\end{proof}
\subsection{For positive first Chern class}
Let $X$ be a compact K\"ahler manifold with $c_1(X)>0$. We assume that the decomposition $(\Omega_i)$ admits a CKE metric and $G$ is trivial, which is, by Theorem \ref{Coercivity}, equivalent to the ${\bf J}$-coercivity of the Ding functional
\[
{\bf D}(\bphi) \geq \d {\bf J}(\bphi)-C, \quad \bphi \in \prod_i \tilde{\cH}_i.
\]
In the same way as in the $\lambda=-1$ case, the problem can be reduced to the $C^0$-estimate along the iteration. Indeed, this is possible assuming the existence of CKE metrics:
\begin{lem}
Under the above assumptions, we have a uniform bound $\|\phi_{\ell,i}\|_{C^0} \leq C$.
\end{lem}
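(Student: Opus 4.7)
The plan is to combine the usual monotonicity-plus-coercivity reduction (which delivers averaged bounds on the $\phi_{\ell,i}$) with a Bando-Mabuchi Green function argument applied to the iterated metric $\omega_{\ell,i}$ itself. The latter is available thanks to a uniform positive Ricci lower bound that is built into the CKE iteration.

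Step 1 (averaged control). Since each component of ${\bf D}$ is invariant under constant shifts of $\phi_i$, I may pass freely between the sup-normalized $\phi_{\ell,i}$ of \eqref{NMA} and the $\AM$-normalized representatives $\tilde\phi_{\ell,i}:=\phi_{\ell,i}-c_{\ell,i}$. Proposition \ref{gpi} (1) yields ${\bf D}(\tilde{\bphi}_\ell)\le {\bf D}(\tilde{\bphi}_0)$, and ${\bf J}$-coercivity of ${\bf D}$---in force by our standing assumption and Theorem \ref{Coercivity}---then forces $J_i(\tilde\phi_{\ell,i})\le C$. The standard $L^1$-compactness estimate $\sup_X\psi-C_1\le\tfrac{1}{[\Omega_i]^n}\int_X\psi\,\omega_{0,i}^n\le \sup_X\psi$ for sup-normalized $\omega_{0,i}$-PSH $\psi$, applied to $\tilde\phi_{\ell,i}-\sup_X\tilde\phi_{\ell,i}$, pins $\sup_X\tilde\phi_{\ell,i}$---and hence the shift $c_{\ell,i}$---to a uniformly bounded interval. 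Consequently $\phi_{\ell,i}$ and $\tilde\phi_{\ell,i}$ differ by a uniformly bounded constant, and by translation invariance of $J_i$ and $I_i$ combined with \eqref{JAI}, both $I_i(\phi_{\ell,i})$ and $\tfrac{1}{[\Omega_i]^n}\int_X\phi_{\ell,i}\,\omega_{0,i}^n$ lie in a bounded interval.

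Step 2 (Ricci lower bound and Green estimate). Reading off \eqref{CRI} one obtains
\[
\Ric(\omega_{\ell,i})=\sum_{j\le i}\omega_{\ell,j}+\sum_{j>i}\omega_{\ell-1,j}\ge \omega_{\ell,i},
\]
so the Bando-Mabuchi Green function estimate \cite{BM85} supplies a uniform constant $K$ with $G_{\omega_{\ell,i}}\ge -K$. Plugging into Green's representation formula and using $\Delta_{\omega_{\ell,i}}\phi_{\ell,i}=n-\Tr_{\omega_{\ell,i}}\omega_{0,i}$ together with $\Tr_{\omega_{\ell,i}}\omega_{0,i}\ge 0$, I get
\[
\phi_{\ell,i}(x)-\frac{1}{[\Omega_i]^n}\int_X\phi_{\ell,i}\,\omega_{\ell,i}^n=\frac{n}{[\Omega_i]^n}\int_X G_{\omega_{\ell,i}}(x,\cdot)\,\omega_{0,i}\wedge\omega_{\ell,i}^{n-1}\ge -Kn.
\]

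Finally, the second term on the left equals $\tfrac{1}{[\Omega_i]^n}\int_X\phi_{\ell,i}\,\omega_{0,i}^n-I_i(\phi_{\ell,i})$, and both pieces are uniformly controlled by Step 1; hence $\inf_X\phi_{\ell,i}\ge -C$, and together with the sup-normalization $\phi_{\ell,i}\le 0$ this gives $\|\phi_{\ell,i}\|_{C^0}\le C$. The main obstacle is the Green function step: the Green function of the fixed reference $\omega_{0,i}$ is readily bounded below but its use only yields a bound on $\sup_X\phi_{\ell,i}$ (which is already $0$ by normalization), so it is essential to work with $G_{\omega_{\ell,i}}$ itself and invoke the Bando-Mabuchi estimate, for which the automatic positive Ricci lower bound above is critical.
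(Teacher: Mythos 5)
Your proposal is correct and follows essentially the same route as the paper: bound $I_i(\phi_{\ell,i})$ via monotonicity of $\mathbf D$ plus $\mathbf J$-coercivity, note the uniform positive Ricci lower bound $\Ric(\omega_{\ell,i})\geq \omega_{\ell,i}$ built into the iteration, and invoke the Bando--Mabuchi Green function estimate for $\omega_{\ell,i}$ (with the diameter bounded via Myers) to close the $C^0$ bound. The only cosmetic difference is that you exploit the sup-normalization to need only the lower Green inequality, whereas the paper bounds the oscillation using Green estimates for both $\omega_{0,i}$ and $\omega_{\ell,i}$.
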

\begin{proof}
As in \cite{Rub08}, the idea is to use the uniform positive lower bound of the Ricci curvature along the iteration.
Applying the coercivity estimate to the iteration $\{ \bomega_\ell \}$ implies that
\[
{\bf D}(\bomega_\ell) \geq \d {\bf J}(\bomega_\ell)-C.
\]
From Proposition \ref{gpi} (1), the LHS is uniformly bounded from above. Thus we have the uniform bound ${\bf J}(\bomega_\ell) \leq C$. Now we consider the solution $\bphi_\ell=(\phi_{\ell,i})$ to the sup-normalized Monge-Amp\`ere equation \eqref{NMA}. Let $G_{\ell, i}$ (resp. $\Delta_{\ell,i}$) be the Green function (resp. $\bp$-Laplacian) for $\omega_{\ell,i}$, and set
\[
A_{\ell,i}:=-\inf_{(x,y) \in X \times X, \; x \neq y} G_{\ell,i}(x,y).
\]
Since $n+\Delta_{0,i} \phi_{\ell+1,i}>0$ and $n-\Delta_{\ell+1,i} \phi_{\ell+1,i}>0$, a Green function estimate implies that
\[
\phi_{\ell,i}-\frac{1}{[\Omega_i]^n} \int_X \phi_{\ell,i} \omega_{0,i}^n=-\frac{1}{[\Omega_i]^n} \int_X G_{0,i}(x,y) \Delta_{0,i} \phi_{\ell,i}(y) \omega_{0,i}^n \leq n A_{0,i},
\]
\[
\phi_{\ell,i}-\frac{1}{[\Omega_i]^n} \int_X \phi_{\ell,i} \omega_{\ell,i}^n=-\frac{1}{[\Omega_i]^n} \int_X G_{\ell,i}(x,y) \Delta_{\ell,i} \phi_{\ell,i}(y) \omega_{\ell,i}^n \geq -n A_{\ell,i}.
\]
Hence we have
\[
\osc_X \phi_{\ell,i} \leq n(A_{0,i}+A_{\ell,i})+I_i(\phi_{\ell,i}).
\]
The third term in the RHS is uniformly bounded from above by \eqref{JAI} and $J_i(\omega_{\ell,i}) \leq {\bf J}(\bomega_\ell) \leq C$. As for the second term, we can use \cite[Theorem 3.2]{BM85} and obtain
\[
A_{\ell,i} \leq c(n) \cdot \frac{{\rm diam}(X, \omega_{\ell,i})}{{\rm Vol}(X, \omega_{\ell,i})}
\]
since $\Ric(\omega_{\ell,i})$ is non-negative. Indeed, along the iteration, we have a uniform lower bound for the Ricci curvature $\Ric(\omega_{\ell+1,i})=\sum_{j \leq i} \omega_{\ell+1,j}+\sum_{j>i} \omega_{\ell,j}>\omega_{\ell+1,i}$. Moreover, applying Myers' Theorem, we find that
\[
{\rm diam}(X, \omega_{\ell,i})^2 \leq (2n-1) \pi^2.
\]
Thus we have the uniform upper bound for $A_{\ell,i}$, and consequently we have $\|\phi_{\ell,i}\|_{C^0} \leq C$.
\end{proof}
\begin{proof}[Proof of Theorem \ref{PFC}]
We invoke Lemma \ref{HOE} to obtain the higher order estimates for $\phi_{\ell,i}$ as well as the uniform equivalence of K\"ahler metrics. Along the same line as in the proof of Theorem \ref{NFC}, we can show the $C^\infty$-convergence to the unique CKE metric $\bomega_{\ell} \to \bomega_{\infty}$ as $\ell \to \infty$. This completes the proof of Theorem \ref{PFC}.
\end{proof}
\newpage

\end{document}